\documentclass[11pt]{article}
\usepackage{amsmath}
\usepackage{amsthm}
\usepackage{amssymb}
\usepackage{hyperref}

\newtheorem{theorem}{Theorem}[section]
\newtheorem{lemma}[theorem]{Lemma}
\newtheorem{corollary}[theorem]{Corollary}
\newtheorem*{question}{Question}
\newtheorem*{definition}{Definition}
\newtheorem*{problem}{Universal set problem}

\title{\vspace{-0.7cm}Discrete Kakeya-type problems and small bases}

\author{Noga Alon\thanks{Schools of Mathematics and Computer Science,
Raymond and Beverly Sackler Faculty of Exact Sciences, Tel Aviv
University, Tel Aviv 69978, Israel and IAS, Princeton, NJ 08540,
USA. Email: {\tt nogaa@tau.ac.il}. Research supported in part by the
Israel Science Foundation, by a USA-Israeli BSF grant,
by the Hermann Minkowski Minerva Center
for Geometry at Tel Aviv University and by the Von Neumann Fund.} \and
Boris Bukh\thanks{Department of Mathematics, Princeton University, Princeton,
NJ 08544, USA. Email: {\tt bbukh@math.princeton.edu}.}
\and Benny Sudakov\thanks{Department of Mathematics,
UCLA,  Los Angeles, CA 90095 and Institute for Advanced Study, Princeton,
NJ. Email: {\tt bsudakov@math.ucla.edu}.
Research supported in part by NSF CAREER award DMS-0546523, NSF
grants DMS-0355497 and DMS-0635607, by a USA-Israeli BSF grant, and by the
State of New Jersey.}}

\newcommand*{\R}{\mathbb{R}}
\newcommand*{\Z}{\mathbb{Z}}
\newcommand*{\abs}[1]{\lvert #1\rvert}

\oddsidemargin  0pt
\evensidemargin 0pt
\marginparwidth 40pt
\marginparsep 10pt
\topmargin -20pt
\headsep 10pt
\textheight 8.7in
\textwidth 6.65in

\date{}

\begin{document}
\maketitle
\begin{abstract}
A subset $U$ of a group $G$ is called $k$-universal if $U$ contains
a translate of every $k$-element subset of $G$. We give several
nearly optimal constructions of small $k$-universal sets, and use them to resolve an
old question of
Erd\H{o}s and Newman on bases for sets of integers, and to obtain
several extensions for other groups.
\end{abstract}
\section{Introduction}
A subset $U$ of $\R^d$ is a Besicovitch set if it contains a unit-length
line segment in every direction.
The Kakeya problem asks for the smallest possible Minkowski dimension of
a Besicovitch set. It is widely conjectured that every Besicovitch
set has Minkowski dimension~$d$. For large $d$ the best lower bounds come from
the approach pioneered by Bourgain \cite{bourgain_kakeya_arith}
which is based on combinatorial number theory. For example, in
\cite{bourgain_kakeya_montg} it is shown
that if every set $X\subset \Z/p\Z$ containing a translate
of every $k$-term arithmetic progression is of size at least
$\Omega(p^{1-\epsilon(k)})$
with $\epsilon(k)\to 0$ as $k\to\infty$, then the Kakeya conjecture
is true.

In this paper we address a related problem where instead of
seeking a set containing a translate of every $k$-term arithmetic
progression, we demand that the set contains a translate
of every $k$-element set. We do not restrict the problem to the
cyclic groups, and consider general (possibly non-abelian) groups.
Given a finite group $G$, we call a subset
$U$ of $G$ \emph{$k$-universal} if $U$
contains a left translate of every $k$-element set.
More generally, we say that $U$ is $k$-universal for $X\subset G$,
if for any $k$-element set $W=\{w_1,\dotsc,w_k\}\subset X$
there is a $g\in G$ such that $gW=\{gw_1,\dotsc,gw_k\}$ is contained in $U$.
We are interested in small $k$-universal sets for $k>1$
(for $k=1$ any one-element set is universal).

Since $U$ contains $\binom{\abs{U}}{k}$
$k$-element subsets, and the orbit of a $k$-set under (left) multiplication
by $G$ has size at most $\abs{G}$, it follows that
$\binom{\abs{U}}{k}\geq \binom{\abs{G}}{k}/\abs{G}$ for any
$U$ that is $k$-universal for the group~$G$.
>From that it is easy to deduce that $\abs{U}\geq \frac{1}{2}\abs{G}^{1-1/k}$.
It is natural to wonder how sharp this lower bound on the size
of $k$-universal sets is.
\begin{question}
Is it true that for every integer $k>1$ there is a constant $c(k)$ such that
every finite group $G$ contains a $k$-universal set of size
not exceeding $c(k)\abs{G}^{1-1/k}$?   If so, is there a universal constant $c$
such that $c(k) \leq c$ for all $k$?
\end{question}
Efficient constructions of $k$-universal sets are hard to come by.
For $k=2$ the problem was solved by Kozma and Lev \cite{kozma_lev}
and independently by Finkelstein, Kleitman and Leighton \cite{fkl_vlsi}
who showed that the easy lower bound above is tight.
\begin{theorem}
For every finite group $G$ there is a $2$-universal set $U\subset G$
of size not exceeding $c\abs{G}^{1/2}$, where $c$ is an
absolute constant.
\end{theorem}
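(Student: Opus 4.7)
I first observe that $U\subseteq G$ is $2$-universal if and only if $U^{-1}U=G$. Indeed, a left translate $\{ga,gb\}$ of $\{a,b\}$ lies in $U$ precisely when $a^{-1}b=(ga)^{-1}(gb)\in U^{-1}U$, and conversely any representation $a^{-1}b=u_1^{-1}u_2$ with $u_1,u_2\in U$ arises from the translate by $g=u_1a^{-1}$. Writing $n=\abs{G}$, the task is to exhibit $U\subseteq G$ with $\abs{U}=O(\sqrt n)$ and $U^{-1}U=G$.

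The natural first attempt is the probabilistic method: include each $g\in G$ in $U$ independently with probability $p$. For any fixed $x\ne e$, the map $g\mapsto gx$ is a fixed-point-free permutation of $G$, whose cycles decompose $G$ into at least $\lfloor n/2\rfloor$ pairwise disjoint pairs $\{g,gx\}$. The event $x\notin U^{-1}U$ requires no such pair to lie entirely in $U$, and since the pairs are disjoint it has probability at most $(1-p^2)^{n/2}$. Choosing $p=C\sqrt{\log n/n}$ and applying a union bound over $x\in G$ produces a $2$-universal set of size $O(\sqrt{n\log n})$, off from the target by a spurious $\sqrt{\log n}$ factor.

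To remove this factor, I would combine the random construction with an alteration step. Fix instead $p=C/\sqrt n$ for a sufficiently large constant $C$: a standard concentration estimate gives $\abs{U}\le 2C\sqrt n$ with high probability, while the expected size of the uncovered set $X=G\setminus U^{-1}U$ is at most $n\,e^{-C^2/2}$, which can be made an arbitrarily small fraction of $n$. Each further element $v$ appended to $U$ covers the set $vU^{-1}\cup Uv^{-1}$ of up to $2\abs{U}=\Theta(\sqrt n)$ new differences, so one may hope to dispatch $X$ with only $O(\abs{X}/\sqrt n)=O(\sqrt n)$ carefully selected additions.

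The main obstacle lies precisely in this augmentation step. A purely greedy schedule, or a second round of independent random sampling analysed by union bound, loses a $\log n$ factor, since covering a residual set of size $\Theta(n)$ at an average rate of $\Theta(\sqrt n)$ per addition requires $\Theta(\sqrt n\log n)$ rounds. The delicate heart of the proof is therefore a more refined selection in the augmentation---exploiting the quasi-randomness of the initial $U$ via concentration on the covering counts, a second-moment computation, or a deterministic cleanup tailored to the residual set $X$---to certify that $O(\sqrt n)$ well-chosen additions genuinely suffice.
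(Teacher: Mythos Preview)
The paper does not prove this theorem; it attributes it to Kozma--Lev and Finkelstein--Kleitman--Leighton and explicitly notes that ``their proofs relied heavily on the classification of finite simple groups.'' So there is no ``paper's own proof'' to compare to beyond that remark, but the cited approach is structural: one uses the classification to locate suitable subgroups and coset representatives inside an arbitrary finite group, and builds $U$ from those. Your proposal is a completely different, purely probabilistic strategy.

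Your argument, however, is not a proof but a sketch with an acknowledged gap, and the gap is the whole difficulty. The random set with $p=C/\sqrt{n}$ followed by greedy or random augmentation is exactly the kind of argument that yields Theorem~\ref{randthm} of the paper (which for $k=2$ gives $O(\sqrt{n\log n})$), and the authors present that logarithmic loss as the best one gets by these means. Your hope that $O(\sqrt n)$ additions suffice to cover the residual set $X$ is unjustified: each new $v$ contributes at most $2\abs{U}$ differences, but these overlap heavily with the already-covered set $U^{-1}U$, and there is no mechanism forcing a constant fraction of them to land in $X$ at every step. The vague appeals to ``quasi-randomness,'' ``second-moment computation,'' or a ``deterministic cleanup'' are not arguments; no such refinement is known to work here, and if one did, it would supersede the classification-based proofs that the paper cites. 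As it stands, your proposal reproduces the $O(\sqrt{n\log n})$ bound and then stops at precisely the point where the real content begins.
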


Their proofs relied heavily on the classification of finite simple groups,
and do not seem to generalize to constructions of $k$-universal sets
for $k\geq 3$. However, an easy probabilistic argument produces
$k$-universal sets with a loss of a logarithmic factor.
To state the actual result, which is a bit more general, we first need to define
an auxiliary notion of a non-doubling set in a group. This notion
is a special case of the general concept of sets with small doubling,
studied in combinatorial number theory.
\begin{definition}
A set $X\subset G$ is \emph{non-doubling} if $XX=\{xx' : x,x'\in X\}$
has at most $3\abs{X}$ elements.
\end{definition}
Note that in particular, any subgroup of a group
$G$ is non-doubling.
\begin{theorem}
\label{randthm}
For every non-doubling set $X$ of size $|X|>1$ in a finite group $G$ there is a $U\subset G$
which is $k$-universal for $X$,
of size  $\abs{U}\leq 36\abs{X}^{1-1/k}\log^{1/k}\abs{X}$.
In particular, for every finite group $G$ there is a $k$-universal
set $U$ of size $\abs{U}\leq 36\abs{G}^{1-1/k}\log^{1/k}\abs{G}$.
\end{theorem}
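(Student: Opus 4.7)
The plan is to build $U$ probabilistically as a random subset of a small ``symmetrization'' of $X$, and to cover each $k$-subset $W\subset X$ by extracting a large family of disjoint translates on which the covering events become independent.

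First I would use a non-abelian Ruzsa triangle inequality in the form $\abs{X}\cdot\abs{X^{-1}X}\leq\abs{XX}^{2}$ to deduce $\abs{X^{-1}X}\leq 9\abs{X}$ from the non-doubling hypothesis. Set $Y:=X^{-1}X$. Since $x^{-1}W\subset Y$ whenever $x\in X$ and $W\subset X$, it suffices to find a small $U\subset Y$ such that for every $k$-subset $W\subset X$ at least one translate $x^{-1}W$ with $x\in X$ is entirely contained in $U$.

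Next, let $U'\subset Y$ be the random subset obtained by including each element independently with probability $p=(Ck^{3}\log\abs{X}/\abs{X})^{1/k}$, where $C$ is an absolute constant to be fixed. For a given $k$-subset $W\subset X$, build $T_{W}\subset X$ greedily so that the translates $\{x^{-1}W:x\in T_{W}\}$ are pairwise disjoint: two such translates share an element exactly when $yx^{-1}\in WW^{-1}$, and $\abs{WW^{-1}}\leq k^{2}$, so each greedy addition disqualifies at most $k^{2}$ elements of $X$, yielding $\abs{T_{W}}\geq\abs{X}/k^{2}$. By disjointness, the events $\{x^{-1}W\subset U'\}$ for $x\in T_{W}$ depend on disjoint portions of $U'$ and are therefore mutually independent, each of probability $p^{k}$. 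Hence the probability that no translate of $W$ lies in $U'$ is at most $(1-p^{k})^{\abs{X}/k^{2}}\leq\exp(-p^{k}\abs{X}/k^{2})$. With $C$ large enough that $p^{k}\abs{X}/k^{2}\geq(k+1)\log\abs{X}$, a union bound over the at most $\abs{X}^{k}$ subsets $W$ keeps the failure probability below $1/\abs{X}\leq 1/2$. Meanwhile $\mathrm{E}\abs{U'}=p\abs{Y}\leq 9p\abs{X}$ is of order $\abs{X}^{1-1/k}\log^{1/k}\abs{X}$, and Markov's inequality gives $\abs{U'}\leq 2\,\mathrm{E}\abs{U'}$ with probability at least $1/2$. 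Combining the two estimates, there exists a $U$ that is $k$-universal for $X$ with $\abs{U}\leq 36\abs{X}^{1-1/k}\log^{1/k}\abs{X}$ once $C$ is chosen so that the constants collapse correctly. The second statement follows by taking $X=G$, which is trivially non-doubling since $GG=G$.

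The main obstacle is quantitative: pinning down the final constant $36$. The natural losses---a factor $9$ from the Ruzsa bound, a factor $(Ck^{3})^{1/k}$ from the probability calibration (maximized near $k\approx 3$), and a Markov factor of $2$---already push the constant above $36$, so some sharpening is required. One can replace Markov with a Chernoff-type bound, fine-tune the probability threshold, or simply observe that in the regime where $36\abs{X}^{1-1/k}\log^{1/k}\abs{X}\geq\abs{X}$ one may take $U:=X$, which is trivially $k$-universal for $X$ via the identity translate.
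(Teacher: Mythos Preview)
Your argument is essentially the paper's argument: pick a random subset of a small ambient set containing many translates of each $W$, extract $\geq |X|/k^2$ pairwise disjoint translates to get independence, then combine a union bound with Markov. The one substantive difference is your choice of ambient set, and that difference is exactly what is causing your constant trouble.

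You take $Y=X^{-1}X$ and invoke the Ruzsa triangle inequality to get $|X^{-1}X|\leq |XX|^2/|X|\leq 9|X|$. The paper instead works inside $Z=XX$ itself: for $x\in X$ and $W\subset X$ the \emph{left} translate $xW$ already lies in $XX$, and $|XX|\leq 3|X|$ is the non-doubling hypothesis verbatim, no Ruzsa needed. With the ambient set of size $3|X|$ rather than $9|X|$, the paper takes $p=(2k^3\log|X|/|X|)^{1/k}$, uses Markov with a factor $3$ (so the two bad events have probabilities at most $1/2$ and $1/3$), and obtains $|U|\leq 3p|Z|\leq 9p|X|\leq 9\,(2k^3)^{1/k}|X|^{1-1/k}\log^{1/k}|X|$; since $(2k^3)^{1/k}\leq 4$ (maximized at $k=2$), this gives exactly $36$. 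Your factor $9$ in place of $3$ is what pushes you over.

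So the fix is not Chernoff or a sharper threshold: simply replace $x^{-1}W\subset X^{-1}X$ by $xW\subset XX$ throughout. The disjointness count is unchanged (two translates $xW$, $x'W$ meet only if $x'\in xWW^{-1}$, still at most $k^2$ choices), and the constant then falls out cleanly. Your fallback observation that one may take $U=X$ when $36|X|^{1-1/k}\log^{1/k}|X|\geq |X|$ is also what the paper does to dispose of the case $p>1$.
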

An interesting aspect of this result is that it gets better as $k$
gets larger. For instance, if $k\geq \log\log\abs{G}$, then
$(\log\abs{G})^{1/k}=O(1)$, showing
that the lower bound is tight up to a constant factor for moderately
large values of $k$. Here and everywhere in the paper
the logarithms are natural  (to the base $e=2.71\dotsc$).
For simplicity of presentation we also assume,  throughout the paper,
that all groups considered here are sufficiently large.

It is possible to improve upon the probabilistic construction
when the group is cyclic for any value of $k$, as well
as for some large families of groups for fixed~$k$.
\begin{theorem}
\label{constrthm}
\begin{enumerate}
\item
\label{cyclicconstr} If $G$ is cyclic, there is a $k$-universal set
of order at most $72\abs{G}^{1-1/k}$.
\item \label{symconstr} If $G=S_n$ is a symmetric group,
there is a $k$-universal $U\subset G$ of size at most
$(3k+1)!\abs{G}^{1-1/k}$.
\item \label{abelconstr} If $G$ is Abelian, there is a $k$-universal
$U\subset G$ of size at most $8^{k-1} k \abs{G}^{1-1/k}$.
\end{enumerate}
\end{theorem}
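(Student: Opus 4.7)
I address the three parts separately: (a) uses an explicit multiscale construction in $\Z_n$; (b) combines the coset structure of a Young subgroup with Theorem \ref{randthm}; (c) inducts on $|G|$ using the abelian structure theorem. The constants $72$, $(3k+1)!$, and $8^{k-1}k$ reflect the different overheads of these three approaches.

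For part (a), identify $\Z_n$ with $\{0,\dots,n-1\}$ and set $m=\lceil n^{1/k}\rceil$. I would build $U$ as a union of arithmetic progressions at multiple scales: several APs whose lengths and common differences are chosen as powers of $m$, with total size $O(n^{(k-1)/k})$. Given any $k$-subset $W\subseteq \Z_n$, a cyclic-gap pigeonhole argument places $W$, after a rotation, inside an arc of length at most $(1-1/k)n$. Within that arc, expanding the sorted elements of $W$ in base $m$ and analyzing the resulting digit patterns, one shows that a further translation aligns $W$ with the multiscale template of $U$. The constant $72$ absorbs the overhead of the layering.

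For part (b), let $H=S_{n-3k-1}\le S_n$ be the pointwise stabilizer of $\{1,\dots,3k+1\}$. As a subgroup $H$ is non-doubling, so Theorem \ref{randthm} yields a set $V\subseteq S_n$ that is $k$-universal for $H$ of size at most $36|H|^{1-1/k}\log^{1/k}|H|$. I would then take $U=S_{3k+1}\cdot V$, so that $|U|\le(3k+1)!\,|V|$. Given $\pi_1,\dots,\pi_k\in S_n$, the slack of $3k+1$ "head" positions against only $k$ permutations allows a left translation $g$ that simultaneously moves each $\pi_i(\{1,\dots,3k+1\})$ into $\{1,\dots,3k+1\}$, reducing the problem to $k$-universality for $H$, which is then handled by $V$. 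The $\log^{1/k}|H|$ overhead is absorbed into the polynomial ratio $(|G|/|H|)^{1/k}=\Theta(n^{(3k+1)/k})$, which dominates $(3k+1)!^{1/k}$ for $n$ large, yielding the bound $(3k+1)!(n!)^{1-1/k}$.

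For part (c), I would induct on $|G|$. By the structure theorem, any abelian $G$ of order $n$ contains a subgroup $H$ with $|H|$ within a constant factor of $n^{1/k}$ (by combining cyclic subgroups of Sylow factors). Inductively obtain $k$-universal sets $U_H\subseteq H$ and $\bar U\subseteq G/H$, and lift $\bar U$ to a set of coset representatives $T\subseteq G$. Setting $U=T+U_H$ gives $|U|\le|T|\cdot|U_H|$. For any $k$-set $W\subseteq G$, project to $\bar W\subseteq G/H$, use the universality of $\bar U$ to find a translate $\bar W+\bar t\subseteq\bar U$, fix any lift $\tau$ of $\bar t$, and compute the fiber residues $h_i=w_i+\tau-t_{\bar w_i+\bar t}\in H$; then adjust $\tau\to\tau+r$ with $r\in H$ so that $\{h_i+r\}\subseteq U_H$, invoking the $k$-universality of $U_H$. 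The factor $8^{k-1}k$ arises from the constant loss compounded across the $k$ levels of recursion.

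The main obstacle is part (a). An absolute constant $72$ (uniform in $k$) precludes any logarithmic factor coming from the probabilistic argument of Theorem \ref{randthm}, and for prime $n$ it also precludes the subgroup recursion of (c) since $\Z_p$ has no intermediate subgroups. The key technical task is to verify that the multiscale template of $U$ accommodates every $(k-1)$-tuple of cyclic gaps of a $k$-subset; the "extreme" gap patterns (one large gap with many small ones, or $k$ nearly equal gaps) require the most delicate choice of the template.
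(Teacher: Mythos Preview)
Your plan diverges from the paper's in all three parts, and two of the parts contain genuine gaps.

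\textbf{Part (a).} You correctly identify this as the crux, but your ``multiscale AP / base-$m$ digits'' sketch is not a proof and is unlikely to yield a constant independent of $k$. Constructions of this flavour do appear in the paper (see the binary-digit construction in Theorem~\ref{cyclictuple}), but they produce universal $k$-\emph{tuples}, and the paper explicitly notes that this route gives $r_k(G)\le 8k$, with the $k$-dependence unavoidable via that method. The paper's actual proof of (a) is a completely different, algebraic construction inspired by Singer's theorem: for a prime $p$ and $r=(p^{k+1}-1)/(p-1)$, identify $\Z/r\Z$ with the $1$-dimensional subspaces of $F_{p^{k+1}}$ over $F_p$ via a primitive root; the set of lines lying in a fixed hyperplane is then $k$-universal (because any $k$ lines span at most a $k$-dimensional subspace, hence lie in a translate of the hyperplane), and has size $(p^k-1)/(p-1)$. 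Passing from $\Z/r\Z$ to arbitrary cyclic $G$ and choosing $p$ near $|G|^{1/k}$ gives the constant $72$. This projective-geometry idea is the missing ingredient.

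\textbf{Part (b).} Your argument breaks at the step ``a left translation $g$ simultaneously moves each $\pi_i(\{1,\dots,3k+1\})$ into $\{1,\dots,3k+1\}$''. Since $g$ is a bijection, $g(\pi_i(\{1,\dots,3k+1\}))\subseteq\{1,\dots,3k+1\}$ forces equality, hence $\pi_i(\{1,\dots,3k+1\})=g^{-1}(\{1,\dots,3k+1\})$ must be the \emph{same} $(3k+1)$-set for every $i$ --- which of course fails for generic $\pi_1,\dots,\pi_k$. No amount of slack in the head positions rescues this. The paper instead introduces the auxiliary notion of a \emph{universal $k$-tuple} $(U_1,\dots,U_k)$ and the quantity $r_k(G)$, proves a subgroup reduction lemma ($r_k(G)\le r_k(H)$ whenever $|H|\ge|G|^{1-1/k}$), and observes that $|S_{n-1}|\ge|S_n|^{1-1/k}$ for $n\ge 3k$, so one reduces all the way down to $S_{3k-1}$ and uses the trivial bound there.

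\textbf{Part (c).} Your recursive $H$ and $G/H$ scheme is plausible in spirit, but the constant analysis is loose: the recursion depth is not obviously $k$, and the compounding you describe does not clearly produce $8^{k-1}k$. The paper again uses universal $k$-tuples: write $G=G_1\times\dots\times G_t$ with each $G_r$ cyclic; if $t\ge k$ reduce via the subgroup lemma, and if $t\le k-1$ take the direct product of universal $k$-tuples in each $G_r$ from Theorem~\ref{cyclictuple}, picking up a factor of $8$ per cyclic factor.
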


\noindent
Moreover, many more families of groups containing $k$-universal
sets of size $c(k)\abs{G}^{1-1/k}$ can be constructed
by using lemma~\ref{subgrouplem} below.

We will apply these results to settle an old problem of Erd\H{o}s and Newman
\cite{erdos_newman} on bases for sets of integers.
They studied bases for $m$-element subsets
of $\{1,\dotsc,n\}$, where a set $B$ is a basis for $A$
if $A\subset B+B=\{b_1+b_2 : b_1,b_2\in B\}$. Since
$\{0\}\cup A$ is a basis for $A$, and
there is a set $X$ with at most $c\sqrt{n}$ elements such that
$X+X\supset\{1,\dotsc,n\}$ it follows that
for any $m$-element subset of $\{1,\dotsc,n\}$
there is always a basis of size $\min(c\sqrt{n},m+1)$.
Erd\H{o}s and Newman showed by a counting argument,
that compared the number of $m$-element sets with the
number of possible bases of a given size,
that if $m$ is somewhat smaller than $\sqrt{n}$,
say $m=O(n^{1/2-\epsilon})$, then almost no $m$-element set
has a basis of size $o(m)$. Similarly, if $m$ is at
least $n^{1/2+\epsilon}$ almost all $m$-element sets require
a basis of size at least $c\sqrt{n}$. However,
for the borderline case
when $m$ is of the order $\sqrt{n}$ the
counting argument only yielded existence
of sets that need a basis of size
$c\sqrt{n}\log\log n/\log n$. They asked
if every $m$-set of size $m=\sqrt{n}$ has
a basis with $o(m)$ elements. We answer their question in
the affirmative not only for subsets of $\{1,\dotsc,n\}$
but in a much greater generality that applies to many other groups.

A subset $B$ of a group $G$
is said to be a basis for $A\subset G$ if $A\subset BB=\{bb' : b,b'\in B\}$.
Though the case when $G=(\Z,+)$ and $A\subset\{1,\dotsc,n\}$
is the setting in which Erd\H{o}s and Newman asked
their question, it is better to think of
their question in the group
$G=\Z/n\Z$. On one hand, if $A,B\subset \Z$ and $B+B\supset A$,
then the sets $B'=(B\mod n)\subset \Z/n\Z$ and $A'=(A\mod n)\subset\Z/n\Z$
satisfy $B'+B'\supset A'$. On the other hand,
if $A',B'\subset \Z/n\Z$ satisfy $B'+B'\supset A'$, then
thinking of $A'$ and $B'$ as subsets of $\{1,\dotsc,n\}$
in the natural way and letting $B=B'\cup(B'-n)$ we have
$B+B\supset A'$ in the group of integers.
So, up to a multiplicative constant of $2$ the
Erd\H{o}s-Newman problem is a problem about bases for subsets
of $\Z/n\Z$.

The lower bound of Erd\H{o}s and Newman immediately
carries over to any finite group $G$: there
is  always a set $A$ with at most $\sqrt{\abs{G}}$ elements
for which every basis is of size
at least $c\sqrt{G}\log\log\abs{G}/\log\abs{G}$. It turns out that
this bound is tight for many groups including the cyclic groups.
We say that the group $G$ of order $n$ satisfies the EN-condition if for every
$A\subset G$ of size at most $\sqrt{n}$ there is a basis $B$ of size
$\abs{B}\leq 50\frac{\sqrt{n}\log\log n}{\log n}$.
\begin{theorem}
\label{nondoublingENthm}
If $|G|=n$ and $G$ contains a non-doubling set $X$ satisfying $\sqrt{n}\log^2 n
\leq \abs{X}\leq\sqrt{n}\log^{10}n$, then $G$ satisfies the EN-condition.
\end{theorem}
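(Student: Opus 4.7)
The plan is to combine the $k$-universal set from Theorem~\ref{randthm} with a two-level covering argument: first cover $A$ by a few left-translates of $X$, then compress each piece using $k$-universality.

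With $k$ to be chosen, apply Theorem~\ref{randthm} to $X$ to obtain $U\subset G$ that is $k$-universal for $X$ with $\abs{U}\leq 36\abs{X}^{1-1/k}\log^{1/k}\abs{X}$. Next, a probabilistic argument produces elements $h_1,\dotsc,h_M\in G$ with $A\subset\bigcup_j h_jX$: choosing each $h_j$ independently uniformly in $G$, the probability that a given $a\in A$ is missed by all $h_jX$ is $(1-\abs{X}/n)^M\leq e^{-M\abs{X}/n}$, and a union bound over $A$ shows that $M=\lceil(n/\abs{X})(\log\abs{A}+1)\rceil$ translates suffice. Since $\abs{X}\geq\sqrt{n}\log^2 n$ and $\abs{A}\leq\sqrt{n}$, this gives $M=O(\sqrt{n}/\log n)$.

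For each $j$ set $A_j=A\cap h_jX$, so that $h_j^{-1}A_j\subset X$. Partition $h_j^{-1}A_j$ into $\lceil\abs{A_j}/k\rceil$ subsets $W$ of size at most $k$; extend each to a $k$-subset of $X$ (possible since $\abs{X}\geq k$) and apply $k$-universality to find $g_W\in G$ with $g_WW\subset U$, hence $W\subset g_W^{-1}U$. Combining,
\[A \;\subset\; \bigcup_{j,W}h_jg_W^{-1}U \;=\; S\cdot U,\qquad S=\{h_jg_W^{-1}\}_{j,W},\]
with $\abs{S}\leq\sum_j\lceil\abs{A_j}/k\rceil\leq\abs{A}/k+M$. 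Setting $B=S\cup U$ gives $B\cdot B\supset S\cdot U\supset A$, so $B$ is a basis for $A$ with
\[\abs{B}\;\leq\;\abs{A}/k+M+36\abs{X}^{1-1/k}\log^{1/k}\abs{X}.\]

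It remains to choose $k$. Writing $\abs{X}=\sqrt{n}\log^c n$ with $c\in[2,10]$, take $k=\lceil\log n/((2c+2)\log\log n)\rceil$. A direct computation yields $\abs{X}^{1/k}\approx\log^{c+1}n$ and $\log^{1/k}\abs{X}=1+o(1)$, so $\abs{U}=O(\sqrt{n}/\log n)$, while $\abs{A}/k\leq(2c+2)\sqrt{n}\log\log n/\log n\leq 22\sqrt{n}\log\log n/\log n$. Combined with $M=O(\sqrt{n}/\log n)$, careful bookkeeping of constants yields $\abs{B}\leq 50\sqrt{n}\log\log n/\log n$ for all sufficiently large $n$. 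The main technical point is this balancing: $\abs{U}$ is \emph{increasing} in $k$ (approaching $\abs{X}$ as $k\to\infty$) while $\abs{A}/k$ decreases, so we are forced into a narrow window $k\approx\log n/\log\log n$; the upper bound $\abs{X}\leq\sqrt{n}\log^{10}n$ is exactly what allows such a $k$ to make $\abs{U}=O(\sqrt{n}/\log n)$, while the lower bound $\abs{X}\geq\sqrt{n}\log^2 n$ is what keeps the outer covering cheap.
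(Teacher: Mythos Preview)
Your argument is essentially the paper's own proof: a probabilistic covering $G=YX$ (you cover $A$ directly, which is the same idea), a partition of each translated piece into $k$-blocks, an application of Theorem~\ref{randthm} to get the $k$-universal $U$, and $B=\{\text{shifts}\}\cup U$. The paper fixes $k=\log n/(30\log\log n)$ uniformly rather than tuning $k$ to $c$, but both choices land in the same window and give the same bound.

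One small slip: as written, your sets $A_j=A\cap h_jX$ need not be disjoint, so $\sum_j\lvert A_j\rvert$ could exceed $\lvert A\rvert$ and the inequality $\sum_j\lceil\lvert A_j\rvert/k\rceil\le\lvert A\rvert/k+M$ would fail. Replace $A_j$ by $A\cap h_jX\setminus\bigcup_{i<j}h_iX$ (exactly what the paper does) and the bound goes through.
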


Using this theorem it is not difficult to show that many groups
satisfy the EN-condition.
\begin{corollary}\label{solvthm}
\begin{enumerate}
\item Every solvable (finite) group satisfies the EN-condition. Moreover, every group
of order $n$ that contains a solvable subgroup of size at least $\sqrt n \log^2 n$
satisfies the EN-condition.
In particular every
finite group of odd order satisfies  this condition.
\item Every symmetric group $S_n$ (and every alternating group $A_n$) satisfies
the EN-condition.
\end{enumerate}
\end{corollary}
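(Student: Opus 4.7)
I plan to prove each statement by applying Theorem~\ref{nondoublingENthm}, which reduces the task to exhibiting, inside the group $G$ of order $n$, a non-doubling subset of cardinality in the window $W := [\sqrt{n}\log^{2}n,\, \sqrt{n}\log^{10}n]$. The workhorse construction is the following: if $K \triangleleft L \leq G$ is a normal subgroup of prime index $p$ and $g \in L \setminus K$, then for every $1 \leq t \leq p$ the set $X = \bigcup_{i=0}^{t-1} g^{i} K$ has $|X| = t|K|$ and, by normality of $K$ in $L$, satisfies $XX \subseteq \bigcup_{s=0}^{2t-2} g^{s} K$, so $|XX| \leq (2t-1)|K| < 2|X|$ and $X$ is non-doubling. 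In particular, every subgroup is non-doubling.

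For part~(1), I refine a composition series of the solvable group $G$ so that every successive quotient has prime order, giving $1 = H_{0} \triangleleft H_{1} \triangleleft \cdots \triangleleft H_{k} = G$, and let $j^{*}$ be minimal with $|H_{j^{*}}| \geq \sqrt{n}\log^{2}n$. If $|H_{j^{*}}| \leq \sqrt{n}\log^{10}n$, then $H_{j^{*}}$ itself is the required set. Otherwise the prime index $p_{j^{*}} = |H_{j^{*}} : H_{j^{*}-1}|$ exceeds $\log^{8}n$, and the workhorse applied with $K = H_{j^{*}-1}$, $L = H_{j^{*}}$, $t = \lceil \sqrt{n}\log^{2}n / |H_{j^{*}-1}|\rceil$ yields a non-doubling set of size in $[\sqrt{n}\log^{2}n,\, 2\sqrt{n}\log^{2}n] \subset W$; the bound $t\,|H_{j^{*}-1}| < |H_{j^{*}}|$ forces $t \leq p_{j^{*}}$. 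For a group merely containing a solvable subgroup $H$ of size at least $\sqrt{n}\log^{2}n$, I run the same argument inside $H$, the target window being determined only by $|G|=n$. The case of odd-order groups then follows from the Feit--Thompson theorem.

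For part~(2), inside $S_{n}$ I use the ascending chain of point-stabilizers $S_{1} < S_{2} < \cdots < S_{n}$, with $|S_{k}| = k!$. Consecutive orders differ by a factor $k+1 \leq n$, whereas $W$ (for $|G|=n!$) has multiplicative width $\log^{8}(n!)$, which is of order $(n\log n)^{8}$ and thus vastly larger than $n$; consequently some $k!$ lands in $W$, and $S_{k}$ is the required non-doubling subgroup. The alternating case is identical using $A_{k} \leq A_{n}$ of order $k!/2$.

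The only step where real care is needed is the ``gap'' case of part~(1): one must check that the coset union $X$ actually falls inside $W$ even when $|H_{j^{*}-1}|$ is far below $\sqrt{n}$. This is exactly where the generous $\log^{10}/\log^{2}$ ratio of the window is deployed against the bare lower bound $p_{j^{*}} > \log^{8}n$ on the size of the prime jump; any polynomially narrower window would break this calibration.
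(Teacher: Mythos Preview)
Your proof is correct and essentially matches the paper's: the coset-union construction inside a subnormal series with cyclic quotients (packaged in the paper as Lemma~\ref{l33}) handles part~(a), and the chain $S_1<S_2<\cdots<S_n$ handles part~(b). One small note: your coset union already lands in $[\sqrt{n}\log^{2}n,\,2\sqrt{n}\log^{2}n]$ regardless of how small $|H_{j^{*}-1}|$ is, so the calibration remark in your final paragraph is overstated---a window of multiplicative width~$2$ already suffices, and the preliminary case split on whether $|H_{j^{*}}|$ itself falls in $W$ is unnecessary.
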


Estimating the size of the smallest possible basis for explicitly given sets is often
far harder.
Erd\H{o}s and Newman showed that any basis for the set of squares
$\{t^2 : t=1,\dotsc,n\}$ (which is a subset of $\{1,2, \ldots ,n^2\}$)
is of size at least $n^{2/3-o(1)}$
for large values of $n$, which is an improvement
over the trivial lower bound of $n^{1/2}$.
They constructed a small basis for the squares, of size
only $O\bigl(\frac{n}{\log^M n}\bigr)$ for any $M$.
Wooley \cite[Problem~2.8]{cite:aimprobs} asked
about powers other than the squares.
Whereas it is likely that any basis for the set of $d$-th powers
$\{t^d : t=1,\dotsc,n\}$ is of size $\Omega(n^{1-\epsilon})$ for every
$\epsilon>0$ and $d\geq 2$, we can report only a modest improvement
of the $n^{2/3-o(1)}$ lower bound of Erd\H{o}s and Newman
for large values of $d$.
\begin{theorem}\label{enimpr}
The set $\{t^d : t=1,\dotsc,n\}$ does not have a basis
of size $O(n^{3/4-\frac{1}{2\sqrt{d}}-\frac{1}{2(d-1)}-\epsilon})$
for any $\epsilon>0$.
\end{theorem}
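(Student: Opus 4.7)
The plan is to assume $B + B \supseteq S := \{t^d : t = 1, \ldots, n\}$ with $N = \abs{B}$, and derive the improved lower bound $N \geq n^{3/4 - 1/(2\sqrt{d}) - 1/(2(d-1)) - \epsilon}$, starting from the trivial $N \geq n^{1/2}$ coming from $\abs{B+B} \geq \abs{S} = n$ and $\abs{B+B} \leq N^2$. The idea is to combine two complementary $d$-dependent sumset inputs.

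For the $-1/(2(d-1))$ contribution, I would proceed via pigeonhole and a multiplication-table estimate. For each $t \in [n]$ fix a representation $t^d = b_1(t) + b_2(t)$ in $B + B$. By pigeonhole on the $b_1(t)$'s, there exist $b_0 \in B$ and $T \subseteq [n]$ with $\abs{T} \geq n/N$ such that $b_1(t) = b_0$ for all $t \in T$; then $\{t^d - b_0 : t \in T\} \subseteq B$, whence $B - B$ contains $\{t^d - s^d : t, s \in T\}$. The factorization $t^d - s^d = (t-s)(t^{d-1} + t^{d-2} s + \cdots + s^{d-1})$ involves a symmetric polynomial of degree $d-1$ in each variable; divisor-theoretic estimates analogous to Erd\H{o}s's bound on the multiplication table, extended to higher degree, give $\abs{\{t^d - s^d : t, s \in T\}} \geq \abs{T}^{2 - 1/(d-1) - o(1)}$. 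Combining with $\abs{B - B} \leq N^2$ yields one of the two ingredients, where $1/(2(d-1))$ enters as the degree-$(d-1)$ penalty.

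For the $-1/(2\sqrt{d})$ contribution, apply Pl\"unnecke-Ruzsa to the iterated sumset inclusion $k\cdot S \subseteq 2k\cdot B$. This gives $\abs{B+B} \geq \abs{kS}^{1/(2k)}\cdot N^{1 - 1/(2k)}$, where $\abs{kS}$ is bounded from below via the Vinogradov mean value theorem in its sharp modern form (Wooley's efficient congruencing or Bourgain-Demeter-Guth decoupling): $J_{k,d}(n) \ll n^{2k - k(k+1)/2 + \epsilon}$ in the relevant range, so by Cauchy-Schwarz $\abs{kS} \geq n^{2k}/J_{k,d}(n) \geq n^{k(k+1)/2 - \epsilon}$. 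The optimum is achieved near $k \sim \sqrt{d}$, where the Vinogradov mean value just saturates the range $\abs{kS} \leq k n^d$, producing the $-1/(2\sqrt{d})$ correction.

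Combining both ingredients with $\abs{B+B} \leq N^2$ and optimizing parameters yields the stated lower bound; the base exponent $3/4$ arises from the interplay between $\abs{B+B} \leq N^2$ and the stronger sumset lower bound produced by the Pl\"unnecke step. The main obstacle is quantifying the Vinogradov mean value at the intermediate scale $k \sim \sqrt{d}$ (away from both the Hua range $k \sim 2^{d-1}$ and the full Vinogradov asymptotic range $k \sim d(d+1)/2$), which requires the sharpest form of the theorem and careful tracking of lower-order terms. A secondary technical difficulty is making the multiplication-table divisor estimate for the degree-$(d-1)$ polynomial $t^{d-1} + t^{d-2}s + \cdots + s^{d-1}$ quantitative with the right exponent, as the usual multiplication-table argument applies most cleanly in the quadratic ($d=2$) case.
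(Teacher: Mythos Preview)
Your proposal has genuine gaps; the structure you sketch does not reproduce the stated exponent, and the two ``ingredients'' do not produce the two correction terms in the way you suggest.

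First, neither ingredient on its own gives anything beyond the trivial $N\geq n^{1/2}$. From your multiplication-table step you get $N^2\geq\abs{B-B}\geq (n/N)^{2-1/(d-1)-o(1)}$, hence $N\geq n^{(2-1/(d-1))/(4-1/(d-1))-o(1)}$, which tends to $n^{1/2}$ as $d\to\infty$. From your Pl\"unnecke step you get $N^{2k+1}\geq\abs{kS}$, and even granting $\abs{kS}\geq n^{k-\epsilon}$ you obtain $N\geq n^{k/(2k+1)-\epsilon}$, again at most $n^{1/2}$. You never say how to combine the two beyond ``optimizing parameters'', and I do not see a combination that yields $3/4$. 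In the paper the $3/4$ is simply $1-1/(k+1)$ with $k=3$; it does not arise from any Pl\"unnecke-type interplay.

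Second, the analytic inputs you invoke are not the right ones. The bound $J_{k,d}(n)\ll n^{2k-k(k+1)/2+\epsilon}$ is the shape of the Vinogradov mean value for the \emph{full system} $\sum x_i^j=\sum y_i^j$ ($1\leq j\leq d$), not for the single degree-$d$ equation controlling $\abs{kS}$; the single-equation mean value behaves very differently and no sharp result at $k\sim\sqrt{d}$ of the kind you need is available. Likewise, a lower bound of $\abs{T}^{2-1/(d-1)-o(1)}$ for $\abs{\{t^d-s^d:t,s\in T\}}$ with $T\subset[n]$ arbitrary is not a known multiplication-table statement, and the factorisation $(t-s)\sum t^{d-1-i}s^i$ does not deliver such an exponent.

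The paper's argument is quite different and much shorter. One forms a graph on $B$ with an edge for each represented $d$-th power, passes to a subgraph of minimum degree $\geq n/N$, and observes that every length-$3$ walk $a_0,a_1,a_2,a_3$ produces a solution of $x_1^d-x_2^d+x_3^d=a_0-a_3$ in distinct positive integers $\leq n$. Heath-Brown's determinant-method bound gives at most $n^{2/\sqrt{d}+2/(d-1)+o(1)}$ such solutions for each fixed right-hand side, so a pigeonhole count on walks forces $(n/N)^3\leq N\,n^{2/\sqrt{d}+2/(d-1)+o(1)}$, i.e.\ $N\geq n^{3/4-1/(2\sqrt{d})-1/(2(d-1))-o(1)}$. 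Both correction terms come from a \emph{single} Diophantine input (Heath-Brown's exponent $2/\sqrt{d}+2/(d-1)$), not from two separate mechanisms.
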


The rest of this short paper is split into four sections. The first one
describes several constructions of small $k$-universal sets.
It is followed by two sections containing the results
on small bases that answer the Erd\H{o}s-Newman question, and the result on
the bases for powers of integers.
The last section contains some concluding remarks.

We will employ the following notation. For a set
$X$ we denote by $X^t$ the $t$-fold Cartesian product
$X\times\dotsc\times X$. The notations
$2^X$ and $\binom{X}{t}$ denote the family of all subsets
of $X$ and the family of all $t$-element subsets of $X$, respectively.
For the sake of clarity, throughout the paper (including the introduction)
we do not make any serious attempt to
optimize the absolute constants in our statements and proofs, and omit
all floor and ceiling signs whenever these are not crucial.

\section{Universal sets}
In this section we present several results about
small $k$-universal sets. Recall that for any group $G$ every such set should
contain at least $\frac{1}{2}|G|^{1-1/k}$ elements.
We start off with a simple probabilistic construction of $k$-universal
sets that are only logarithmic factor larger than this lower bound.
\begin{proof}[Proof of Theorem~\ref{randthm}]
Let $X$ be a subset such that $Z=XX$ has size at most $3|X|$.
Let $p=\left(\frac{\abs{X}}{2k^3\log\abs{X}}\right)^{-1/k}$.
If $p>1$, then $2k^3 \log {\abs{X}} > |X|$, implying that
$\frac{\abs{X}}{\log {\abs{X}}} < 36^k$
and therefore that $|X| \leq 36|X|^{1-1/k} \log^{1/k} |X|$.
In this case there is nothing to
prove, as $X$ itself is obviously $k$-universal for $X$.
Let $U$ be a random subset of $Z$ obtained by picking each element of $Z$,
randomly and independently, with probability $p$.
Fix any $k$-element set $S\subset X$.
For any $x\in X$ the set $xS$ is contained in $Z$. Note that if two
sets $xS$ and $x'S$ have a non-empty intersection, then
$x'= xs_1s_2^{-1}$ for some $s_1,s_2 \in S$. This shows that
each set $xS$ intersects fewer than $k^2$ other sets of this form.
So for every subset $X'\subset X, |X'|< |X|/k^2$ there is an element $x^* \in X$ such that
$x^*S$ is disjoint from $xS$ for all $x \in X'$.
Therefore the maximum subset $X'\subset X$ such that $\{xS\}_{x\in X'}$ are pairwise disjoint
contains at least $\abs{X'}\geq \frac{\abs{X}}{k^2}$ elements.  Fix such an $X'$.

For any $x\in X$ the probability that $xS\subset U$ is
$p^k=\frac{2k^3\log\abs{X}}{\abs{X}}$. Therefore the probability that there is no $x\in X'$
such that $xS\subset U$ is
\begin{equation*}
\left(1-\frac{2k^3\log\abs{X}}{\abs{X}}\right)^{\abs{X'}}\leq
\left(1-\frac{2k^3\log\abs{X}}{\abs{X}}\right)^{|X|/k^2} \leq
e^{-2k\log\abs{X}}= \frac{1}{\abs{X}^{2k}}
\end{equation*}
As there are no more than $\abs{X}^k$ $k$-element subset of $X$, the
probability that $U$ is not $k$-universal for $X$ is at most
$1/\abs{X}^k\leq 1/2$.

On the other hand, $\mathbb{E}\big[\abs{U}\big]=p|Z|$ and by Markov's inequality
\begin{equation*}
\Pr\Big[\abs{U}>3 p\abs{Z}\Big]= \Pr\Big[\abs{U}>3 \mathbb{E}\big[\abs{U}\big]\Big]
\leq \frac{1}{3}.
\end{equation*}
Therefore, with probability at least $1-1/2-1/3>0$, $\abs{U}\leq 3p\abs{Z}$ and it is
$k$-universal for $X$. In particular, such a $U$ exists.
The theorem now follows since $|Z| \leq 3|X|$ and $(2k^3)^{1/k}\leq 4$.
\end{proof}

\vspace{0.1cm}
\noindent
{\bf Remark.}\,
>From the above proof one can easily deduce that for every two subsets $X, X^*$ of
a finite group $G$ with $|X|=|X^*|$,
$G$ contains a $k$-universal set $U$ for $X$ of size at most
$$|U| \leq 12 \frac{|X^*X|\log^{1/k} |X|}{|X|^{1/k}}\,.$$
Thus, if a set $X$ does not grow significantly when multiplied by
some other set of the same cardinality,
this estimate gives a $k$-universal set for $X$
whose size is only slightly worse than the coresponding lower bound.

\begin{proof}[Proof of Theorem~\ref{constrthm} part \ref{cyclicconstr}]
If $\abs{G}\leq \exp(2^k)$, then it is easy to check that the existence of the desired
$k$-universal set follows from theorem~\ref{randthm}. Assume
that $\abs{G}\geq \exp(2^k)$. Let $p$ be a prime and let
$r=\frac{p^{k+1}-1}{p-1}$. We first construct, for any prime $p$, a
$k$-universal set of size $\frac{p^k-1}{p-1} \approx r^{1-1/k}$ in
$\Z/r\Z$. The construction
is motivated by Singer's theorem \cite{singerthm}.

Let $q=p^{k+1}$ and denote by $F_q$ the finite field of $q$ elements.
Let $\omega$ be a generator of $F_q^*$, and think
of $F_q$ as a vector space of dimension $k+1$ over $F_p$. Since
$\omega^r\in F_p$ and every element
of $F_q^*$ is a power of $\omega$, every $1$-dimensional
subspace of $F_q$ is of the form $w^t F_p$ with
$t\in \Z/r\Z$. Since $(p^{k+1}-1)/(p-1)=r$ is also the number of
$1$-dimensional
subspaces of $F_{q}$ the map
$\phi\colon t\mapsto \omega^t F_p$ is a bijection
between $\Z/r\Z$ and the set of $1$-dimensional subspaces.
Fix any basis of $F_{q}$ and consider a standard coordinate-wise
scalar product on $F_{q}$. For a $1$-dimensional
subspace $L$ of $F_{q}$ let $L^{\bot}$ be the
orthogonal complement of $L$, which is $k$-dimensional.
Since the map $L\mapsto L^{\bot}$ taking a $1$-dimensional
space to its orthogonal complement,
is a bijection, every
$k$-dimensional subspace of $F_q$ is of the form
$\omega^t (F_p)^{\bot}$ for a unique $t\in \Z/r\Z$.

Let $H$ be any subspace of $F_q$ of dimension $k$. Let
$X=\{\phi^{-1}(L) : L\subset H\}$ where $L$ ranges
over all $1$-dimensional subspaces of $H$. The set $X$ is
of size $\abs{X}=(p^k-1)/(p-1)$. Moreover, $X$ is $k$-universal
for $Z/r\Z$. Indeed if $L_1,\dotsc,L_k$ are any
$k$ $1$-dimensional subspaces of $F_q$, then their
span is contained in a $k$-dimensional subspace. Therefore
there is a $t$ such that $\omega^t L_i\subset H$ for $i=1,\dotsc,k$.

If we now think of $X$ not as a subset of $\Z/r\Z$
but of $\{1,\dotsc,r\}$, then $Y=X\cup\bigl(X+r\bigr)$
contains a translate of every $k$-element subset of
$[1,(p^{k+1}-1)/(p-1)]$. Therefore for every cyclic group $G$ of
size $\abs{G}\leq p^k\leq (p^{k+1}-1)/(p-1)$
there is a $k$-universal set of size at most $2(p^k-1)/(p-1)\leq 4p^{k-1}$.
By \cite[Theorem~1]{explicit_pnt} we know that
for every $x>1$ there is a prime $p$ satisfying $x\leq p\leq x (1+\frac{2}{\log x})$
and in particular, there is a prime $p$ such that
$\abs{G}^{1/k}\leq p\leq \abs{G}^{1/k}\bigl(1+\frac{2 k}{\log \abs{G}}\bigr)$.
Therefore every cyclic group $G$ satisfying $\abs{G}\geq \exp(2^k)$ contains a $k$-universal
set of size at most
$$4\abs{G}^{1-1/k}\left(1+\frac{2k}{\log \abs{G}}\right)^{k-1}\leq 4\exp(2k^2/2^k)\cdot
\abs{G}^{1-1/k}< 72\abs{G}^{1-1/k},$$
where here we used that $\exp(2k^2/2^k)< 18$ for all $k$, (with room to spare).
\end{proof}

To prove the existence of small $k$-universal sets in any Abelian
group and in $S_n$ we need a more flexible concept than that of
a $k$-universal set. In order to induct on the size of the group,
one needs to be able to force some of the elements in a translate of a
$k$-element to be confined to a subset of $G$ of size much smaller
than $\abs{G}^{1-1/k}$. This prompts the following definition,
which is inspired by \cite{kozma_lev}.

A $k$-tuple of sets $U=(U_1,\dotsc,U_k)$ where
$U_i\subset G$ is said to be universal if for any $k$-tuple
$W=(w_1,\dotsc,w_k)\in G^k$ there is a $g\in G$ satisfying
$gw_i\in U_i$ for $i=1,\dotsc,k$. Equivalently, $(U_1,\dotsc,U_k)$ is a universal
$k$-tuple if and only if
$\{(u_1^{-1}u_2,\dotsc,u_{k-1}^{-1}u_k) : u_i\in U_i\}=G^{k-1}$.
Although we will not need this equivalence here, we include a short proof that it holds.
Suppose $\{(u_1^{-1}u_2,\dotsc,u_{k-1}^{-1}u_k) : u_i\in U_i\}=G^{k-1}$,
then for a given $(w_1,\dotsc,w_k)\in G^k$ we can solve the system of equations
\begin{align}
\label{cond1}
w_1^{-1}w_2&=u_1^{-1} u_2,\nonumber\\
\vdots&&&u_i\in U_i\qquad\text{ for }i=1,\dotsc,k.\\
w_{k-1}^{-1}w_k&=u_{k-1}^{-1}u_k,\nonumber
\end{align}
Let $g=w_1u_1^{-1}$. Since $w_iu_i^{-1}=w_{i+1}u_{i+1}^{-1}$,
by induction on $i$ it follows that $g=w_iu_i^{-1}$
for $i=1,\dotsc,k$. Hence $g^{-1}w_i=u_i$ is an element of $U_i$.
The reverse direction is similar.

If $(U_1,\dotsc,U_k)$ is a universal $k$-tuple of sets,
then $\bigcup_{i=1}^k X_i$ is a $k$-universal set. The greater flexibility
of universal $k$-tuples
comes at a price: construction of small universal $k$-tuples is more
involved than the construction of $k$-universal sets, and they are not
as small.
\begin{theorem}\label{cyclictuple}
For any cyclic group and
real numbers $1\leq s_1,\dotsc,s_k\leq\abs{G}$ satisfying
$\prod_{i=1}^k s_i=\abs{G}^{k-1}$ there is a
universal $k$-tuple $(U_1,\dotsc,U_k)$ satisfying
$\abs{U_i}\leq 8s_i$.
\end{theorem}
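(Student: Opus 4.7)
My plan is to give an explicit construction of the universal $k$-tuple in $G = \Z/n\Z$ using arithmetic progressions, generalizing the $k = 2$ case in which $U_1 = \{0, 1, \dots, s_1 - 1\}$ and $U_2 = s_1 \cdot \{0, 1, \dots, s_2 - 1\}$ give $U_2 - U_1 = \Z/n\Z$ exactly when $s_1 s_2 = n$.

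First, by simultaneously permuting the indices of the $s_i$ and the components of the tuple, I may assume $s_1 \leq \dots \leq s_k$; rounding each $s_i$ to a nearby integer loses at most a bounded factor that I will absorb into the constant $8$. In the sorted case, $\prod_{i \geq 3} s_i \leq n^{k-2}$ forces $s_1 s_2 \geq n$, which says that the smallest pair of parameters is already "large enough" for the basic $k = 2$ construction to cover all of $\Z/n\Z$. Next, take $\alpha_1 = 1$ and $\alpha_i = s_1 s_2 \cdots s_{i-1} \bmod n$ for $i \geq 2$, and set $U_i = \{0, \alpha_i, 2\alpha_i, \dots, (m_i - 1)\alpha_i\}$ with $m_i \leq 8 s_i$. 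Universality of this tuple is equivalent to surjectivity of the map
\[
\Phi \colon \prod_{i=1}^{k} [0, m_i) \to G^{k-1}, \qquad (j_1, \dots, j_k) \mapsto \bigl(j_{i+1}\alpha_{i+1} - j_i \alpha_i\bigr)_{i=1}^{k-1}.
\]
On $\Z^k$, $\Phi$ has a $(k-1)\times k$ upper bidiagonal matrix whose $(k-1)\times(k-1)$ minors are, up to sign, partial products of the $\alpha_i$. The box volume $\prod m_i \approx 8^k n^{k-1}$ greatly exceeds $|G^{k-1}| = n^{k-1}$, so on average each fiber of $\Phi$ contains about $8^k$ points, leaving ample room for a worst-case guarantee.

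The main obstacle is upgrading this average count to the worst case: every $v \in G^{k-1}$ must have a preimage inside the box $\prod [0, m_i)$, not merely on average. This is a lattice-points-in-a-box problem: arithmetic pathologies such as $\gcd(\alpha_i, n) > 1$ can concentrate the image of $\Phi$ onto sublattices and leave certain residue classes uncovered. I expect the standard workaround is a preliminary Chinese Remainder decomposition $\Z/n\Z \cong \prod_{r} \Z/p_r^{e_r}\Z$ into coprime prime-power components, solving the problem on each factor with the $\alpha_i$ tailored to the local divisibility structure, and then reassembling by the direct-product construction for universal tuples (if $(U_i^{(r)})$ is universal in each $\Z/p_r^{e_r}\Z$, then $(\prod_r U_i^{(r)})$ is universal in $\Z/n\Z$). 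The factor of $8$ in $|U_i| \leq 8 s_i$ is designed to absorb both the rounding losses and the constant-factor overhead incurred by this CRT reassembly.
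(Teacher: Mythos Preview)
Your plan has a genuine gap in the CRT step. If you decompose $\Z/n\Z \cong \prod_r \Z/p_r^{e_r}\Z$ and build a universal $k$-tuple $(U_i^{(r)})_i$ on each factor with $\abs{U_i^{(r)}}\leq c_r\, s_i^{(r)}$, then the reassembled sets satisfy $\abs{U_i}=\prod_r \abs{U_i^{(r)}}\leq \bigl(\prod_r c_r\bigr)\prod_r s_i^{(r)}$: the constants \emph{multiply} over the prime factors of $n$, so you would need each $c_r$ essentially equal to $1$ to land at a constant like $8$. But on a single factor $\Z/p^e\Z$ your arithmetic-progression construction only works exactly when the local parameters are integer powers of $p$; rounding real $s_i^{(r)}$ to such powers costs a factor comparable to $p$, not $1$. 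In short, the CRT route gives a bound $\abs{U_i}\leq C(n)\,s_i$ with $C(n)$ growing with $\omega(n)$, not a fixed $8$. (A related symptom: once $s_1s_2\geq n$, your $\alpha_i = s_1\cdots s_{i-1}\bmod n$ for $i\geq 3$ is an essentially arbitrary residue, so the description of the $U_i$ as nested progressions loses its meaning before the surjectivity argument even starts.)

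The paper avoids CRT entirely. It sets $t_i=\abs{G}/s_i$, rounds the $t_i$ cumulatively to powers of $2$ so that $\tfrac12 t_i\leq 2^{p_i}\leq 2t_i$ and $\abs{G}\leq 2^P\leq 2\abs{G}$ with $P=\sum p_i$, and then works in the single auxiliary group $\Z/2^P\Z$. There the sets $Y_i=\{x: \text{binary digits of $x$ in block $i$ are zero}\}$ form a universal $k$-tuple by an easy digit-by-digit argument, with $\abs{Y_i}=2^{P-p_i}$. Finally the tuple is transferred to $\Z/n\Z$ by unfolding to $\{1,\dots,2^P+\abs{G}\}$ and reducing mod $\abs{G}$, at the cost of one further factor of $2$. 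The three factors of $2$ (one from rounding $t_i$, one from $2^P\leq 2\abs{G}$, one from the unfolding) give the $8$. The key point you are missing is that a single $2$-group can host \emph{any} cyclic group additively, so no multiplicative blow-up over prime factors occurs.
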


\noindent
{\em Proof.}\,
Let $t_i=\abs{G}/s_i$. By definition, it is easy to see
that $\prod_{i=1}^k t_i=|G|$. Since for every $x\geq 1$ there is a
non-negative integer $a$ for which $x/2\leq 2^a\leq x$,
we can select non-negative integers $p_1,\dotsc,p_k$
inductively one by one so that $\tfrac{1}{2}t_i\leq 2^{p_i}\leq 2 t_i$ and
\begin{equation}\label{sizecond}
\prod_{i=1}^r t_i\leq \prod_{i=1}^r 2^{p_i}\leq 2\prod_{i=1}^r t_i\qquad\text{for }r=1,\dotsc,k.
\end{equation}
Indeed, having selected $p_1$ through $p_r$ satisfying \eqref{sizecond}
we select $p_{r+1}$ so that it satisfies
$x/2\leq 2^{p_{r+1}}\leq x$ with $x=2t_{r+1}(\prod_{i=1}^r (t_i/2^{p_i})$.
Note that $ t_{r+1} \leq x \leq 2t_{r+1}$, since by induction  we already have that $1/2 \leq \prod_{i=1}^r (t_i/2^{p_i}) \leq 1$.
Therefore $t_{r+1}/2 \leq 2^{p_{r+1}}\leq 2t_{r+1}$.
Set $P=\sum_{i=1}^k p_i$, and
note that $\abs{G}=\prod_{i=1}^k t_i \leq 2^P\leq 2\prod_{i=1}^k t_i=2\abs{G}$. We will first construct
a universal $k$-tuple of sets $(Y_1,\dotsc,Y_k)$
for the group $\Z/2^P\Z$ satisfying $\abs{Y_i}=2^{P-p_i}$.

Every element of $\Z/2^P\Z$ can be written as a $P$-digit long
number in binary. The digits of such a number are naturally indexed
from $0$ to $P-1$ according to the power of $2$ that they represent.
The set $Y_1$ consists of the numbers whose
$p_1$ least significant digits are all zero. The set $Y_2$
is the set of numbers whose digits from position $p_1$ to
$p_1+p_2-1$ are all zero. In general, a number belongs to the
set $Y_i$ if all the digits from position $\sum_{j=1}^{i-1} p_i$
to $\sum_{j=1}^i p_i-1$ are zero. The $k$-tuple $(Y_1,\dotsc,Y_k)$
is universal. Indeed suppose we are given any $(v_1,\dotsc,v_k)\in (\Z/2^P\Z)^k$.
To find $g\in\Z/2^P\Z$ satisfying $g+v_i\in Y_i$ we proceed in stages.
First the condition $g+v_1\in Y_1$ determines the $p_1$ least significant
digits of $g$. Once these digits are fixed,
whatever the choices for the remaining digits are,
we will have $g+v_1\in Y_1$.
So, we can proceed to find the next $p_2$ digits from the condition
$g+v_2\in Y_2$, and so on until all the digits of $g$ are determined.

Let $Z_i=\{ n\in \{1,\dotsc,2^P+\abs{G}\} : n \bmod 2^P\in Y_i\}$.
As $\abs{G}\leq 2^P$, for every $(v_1,\dotsc,v_k)\in\{1,\dotsc,\abs{G}\}^k$
one can find $0\leq g<2^P$ for which $(g+v_i)\bmod 2^P\in Y_i$
for $i=1,\dotsc,k$. However, $0<g+v_i<2^P+\abs{G}$ implying
$g+v_i\in Z_i$. Thus the sets $U_i=Z_i\bmod \abs{G}$ together
form a universal $k$-tuple in $\Z/\abs{G}\Z\cong G$.
Their sizes are
$$\hspace{4.2cm} \abs{U_i}\leq \abs{Z_i}\leq 2\abs{Y_i}=
2\frac{2^P}{2^{p_i}}\leq 4\frac{\abs{G}}{2^{p_i}}\leq
8\frac{\abs{G}}{t_i}=8s_i.\hspace{4.2cm} \Box$$

Next we show how one can sometimes reduce the construction
of universal $k$-tuples for a group $G$ to the
construction of universal $k$-tuples for a subgroup $H$ of $G$.
The following definitions introduce a measure that estimates how small the
universal $k$-tuples in a given group can be.
For a group $G$ and real numbers $1\leq s_1,\dotsc,s_k\leq \abs{G}$
satisfying $\prod_{i=1}^k s_i=\abs{G}^{k-1}$ let
\begin{equation*}
r_k(G;s_1,\dotsc,s_k)=\min\left\{\frac{\abs{U_1}}{s_1}+\dotsb+\frac{\abs{U_k}}{s_k} :
(U_1,\dotsc,U_k)\text{ is a universal $k$-tuple}\right\}
\end{equation*}
and define $r_k(G)$ by
\begin{equation*}
r_k(G)=\sup\left\{r_k(G;s_1,\dotsc,s_k) : 1\leq s_1,\dotsc,s_i\leq \abs{G}\text{ and }\prod_{i=1}^k s_i=\abs{G}^{k-1}\right\}.
\end{equation*}
By definition, for any universal $k$-tuple $U=(U_1, \ldots, U_k), U_i\subset G$ the set $\cup_{i=1}^kU_i$ is
$k$-universal. Therefore, taking all $s_i=\abs{G}^{1-1/k}$ we have that
any finite group admits a $k$-universal set
of size at most $r_k(G)\abs{G}^{1-1/k}$. The theorem above implies that
$r_k(G)\leq 8k$ for any cyclic group. This is worse than the estimate provided by part
\ref{cyclicconstr} of Theorem
\ref{constrthm}, due to the dependence on~$k$, but this dependence is unavoidable
for this approach. Indeed, it is easy to see that for every universal $k$-tuple
$\prod \abs{U_i}\geq \abs{G}^{k-1}$ and thus the arithmetic-geometric means
inequality implies that $r_k(G)\geq k$
for any group~$G$.

The heart of the inductive construction of small $k$-universal sets
for non-cyclic groups is the following lemma.
\begin{lemma}\label{subgrouplem}
Let $H$ be a subgroup of a finite group $G$ such that
$\abs{H}\geq \abs{G}^{1-1/k}$. Then
\begin{equation*}
r_k(G)\leq r_k(H).
\end{equation*}
\end{lemma}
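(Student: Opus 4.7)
The plan is to lift a universal $k$-tuple from $H$ to $G$ by tacking on a right transversal. Let $m = [G:H]$ and fix $T \subset G$ such that every $g \in G$ decomposes uniquely as $g = ht$ with $h \in H$ and $t \in T$. Given parameters $s_1, \ldots, s_k$ with $\prod s_i = |G|^{k-1}$ and $1 \leq s_i \leq |G|$, I would first use the symmetry of $r_k(G; \cdot)$ in its arguments to relabel so that $s_1 = \min_i s_i$, and then set $\alpha_1 := s_1$ and $\alpha_i := s_i/m$ for $i \geq 2$.

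The main preliminary step, and the place where the size hypothesis on $H$ enters, is to check that $(\alpha_1, \ldots, \alpha_k)$ is a valid parameter tuple for $r_k(H)$. The product identity $\prod \alpha_i = (\prod s_i)/m^{k-1} = |H|^{k-1}$ is a direct calculation. The range condition $1 \leq \alpha_i \leq |H|$ reduces to the two inequalities $s_1 \leq |H|$ and $\min_{i \geq 2} s_i \geq m$; the first comes from $s_1 \leq (\prod s_i)^{1/k} = |G|^{(k-1)/k} \leq |H|$, and the second from $\min_{i \geq 2} s_i \geq (\prod_{i \geq 2} s_i)/|G|^{k-2} = |G|/s_1 \geq |G|/|H| = m$. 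Both inequalities use $|H| \geq |G|^{1-1/k}$ exactly in the spots needed; this is really the main obstacle, since the construction below is inherently asymmetric (one ``small'' coordinate in $H$ and $k-1$ coordinates ``inflated'' by $T$), and the hypothesis is what guarantees that designating the smallest $s_i$ as the special coordinate always works.

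With this in hand, I would invoke the definition of $r_k(H)$ to pick a universal $k$-tuple $(V_1, \ldots, V_k) \subset H$ with $\sum |V_i|/\alpha_i \leq r_k(H; \alpha_1, \ldots, \alpha_k) \leq r_k(H)$, and set $U_1 := V_1$ and $U_i := V_i T$ for $i \geq 2$, so that $|U_1| = |V_1|$ and $|U_i| = m|V_i|$ by the unique $HT$-decomposition. To verify universality in $G$: given $(w_1, \ldots, w_k) \in G^k$, write each $w_1^{-1} w_i = h_i t_{c_i}$ via $T$ for $i \geq 2$; apply the universality of $(V_j)$ in $H$ to the tuple $(e, h_2, \ldots, h_k) \in H^k$ to obtain $v_1 \in V_1$ with $v_1 h_i \in V_i$ for all $i \geq 2$; and take $g := v_1 w_1^{-1}$, which sends $w_1$ to $v_1 \in U_1$ and $w_i$ to $v_1 h_i t_{c_i} \in V_i T = U_i$. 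The cost bound $\sum |U_i|/s_i = |V_1|/s_1 + \sum_{i \geq 2} m|V_i|/s_i = \sum_i |V_i|/\alpha_i \leq r_k(H)$ is then immediate, giving $r_k(G; s) \leq r_k(H)$ for every admissible $s$ and hence $r_k(G) \leq r_k(H)$.
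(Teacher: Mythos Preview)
Your proof is correct and follows essentially the same approach as the paper: pick one ``special'' coordinate that stays inside $H$, inflate the remaining $k-1$ coordinates by a right transversal, and check universality by first reducing $w_1^{-1}w_i$ modulo $H$. The only cosmetic difference is that you designate the special coordinate directly as $\arg\min_i s_i$, whereas the paper argues in two steps (at most one $s_j<|G|/|H|$, at least one $s_j\le|H|$) to locate it; your choice is a bit cleaner but yields exactly the same construction and bound.
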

\begin{proof}
Our aim is to show that $r_k(G;s_1,\dotsc,s_k)\leq r_k(H)$ for
any choice $1\leq s_1,\dotsc,s_k\leq\abs{G}$ satisfying
$\prod_{i=1}^k s_i=\abs{G}^{k-1}$. Fix any such choice.
We claim that there is at most one index $j$ for which
$s_j<\abs{G}/\abs{H}$. Indeed, had there been two such indices
then it would follow that $\abs{G}^{k-1}=\prod_{i=1}^k s_i<\left(\frac{\abs{G}}{\abs{H}}\right)^2\abs{G}^{k-2}$ contradicting $\abs{H}^2\geq
\abs{H}^{\frac{k}{k-1}}\geq \abs{G}$. Moreover there has to be
an index $j$ for which $s_j\leq \abs{H}$ or else
$\abs{G}^{k-1}=\prod_{i=1}^k s_i>\abs{H}^k\geq \abs{G}^{k-1}$.
In short: there is an index $j$ such that
$s_j\leq \abs{H}$, but for all $i\neq j$ we
have $s_j\geq \abs{G}/\abs{H}$.

Set $t_i=s_i\abs{H}/\abs{G}$ for
$i\neq j$ and $t_j=s_j$. Then $\prod_{i=1}^k t_i=\abs{H}^{k-1}$
and $1\leq t_1,\dotsc,t_k\leq \abs{H}$.
Let $(U_1,\dotsc,U_k)$ be a universal $k$-tuple in $H$ satisfying
\begin{equation*}
\frac{\abs{U_1}}{t_1}+\dotsb+\frac{\abs{U_k}}{t_k}\leq r_k(H).
\end{equation*}
Let $T_1$ be a set of representatives of the right cosets of $H$ in $G$ and
and let $T_2$ be the set of inverses of elements of $T_1$.
Then $|T_1|=|T_2|=\abs{G}/\abs{H}$.
Let
\begin{equation*}
Y_i=\begin{cases}
U_i T_1,&\text{if }i \neq j,\\
U_i,&\text{if }i=j.
\end{cases}
\end{equation*}
Then, as we show below, $(Y_1,\dotsc,Y_k)$ is a universal $k$-tuple satisfying
$$\sum_i\frac{|Y_i|}{s_i}=\sum_{i\not =j}\frac{|Y_i|}{t_i(|G|/|H|)} +\frac{|U_j|}{t_j} \leq
\sum_{i\not =j}\frac{|U_i||T_1|}{t_i(|G|/|H|)} +\frac{|U_j|}{t_j}=
\sum_i\frac{|U_i|}{t_i},$$
which shows $r_k(G;s_1,\dotsc,s_k)\leq r_k(H)$. To verify that $(Y_1,\dotsc,Y_k)$ is universal
consider an arbitrary sequence $(w_1, w_2, \ldots ,w_k) \in G^k$.  For each
$i \neq j$, let $t_i \in T_2$ satisfy $w_j^{-1}w_i t_i \in H$, (such a $t_i$ exists
by the defintion of $T_2$).
Since $(U_1, \ldots ,U_k)$ is a universal $k$-tuple for $H$, and as
$w_j^{-1}w_j=1$ is clearly in $H$, there is an
$h \in H$ such that  for every $i \neq j$,  $hw_j^{-1} w_i t_i \in U_i$
and $hw_j^{-1}w_j \in U_j$. Define $g=hw_j^{-1}$. Then
$gw_i \in U_i t_i^{-1} \subset Y_i$ for all $i \neq j$, and
$gw_j \in U_j=Y_j$.  Therefore $(Y_1,\dotsc,Y_k)$ is universal, as needed.
\end{proof}

To construct small $k$-universal sets, it therefore suffices
to find a large subgroup for which one can construct small universal $k$-tuples.
We do it for symmetric and Abelian groups.
\begin{proof}[Proof of parts \ref{symconstr} and \ref{abelconstr} of Theorem~\ref{constrthm}]
b)\, For $n\geq 3k$ it is easy to check that
$\abs{S_{n-1}}=(n-1)! \geq (n!)^{1-1/k}=\abs{S_n}^{1-1/k}$.
Therefore to bound $r_k(S_n)$ for all $n$,
by the preceding lemma, it suffices to bound $r_k(S_n)$ for $n\leq 3k-1$.
Since $r_k(G)\leq k\abs{G}$ for any group, it follows that
for the symmetric group $r_k(S_n) \leq k(3k-1)!$.
This implies that $G=S_n$ has a $k$-universal set of size
$kr_k(S_n)|G|^{1-1/k} <(3k+1)!|G|^{1-1/k}$.

c)\,  We will show that $r_k(G)\leq 8^{k-1}$ for any Abelian group $G$.
Let $G=G_1\times\dotsb\times G_t$ be an arbitrary Abelian group
written as a direct product of cyclic groups. If $t\geq k$, then
at least one of these cyclic groups has order
$\leq \abs{G}^{1/k}$, and the product of the remaining groups
is a subgroup of $G$ of order $\geq \abs{G}^{1-1/k}$. Therefore
by lemma~\ref{subgrouplem} it suffices to consider only
the Abelian groups with $t \leq k-1$ cyclic factors.
Let $s_1,\dotsc,s_k$ satisfying $\prod_{i=1}^k s_i=\abs{G}^{k-1}$ be given.
Let $(U_{r,1},\dotsc,U_{r,k})$ be a universal $k$-tuple
for $G_r$ with $\abs{U_{r,i}}\leq 8 s_i^{\log_{\abs{G}}\abs{G_r}}$
which exists by theorem~\ref{cyclictuple} since
$\prod_{i=1}^k s_i^{\log_{\abs{G}}\abs{G_r}}=\abs{G_r}^{k-1}$. Let
$U_i=U_{1,i}\times\dotsb\times U_{t,i}$ be the Cartesian product.
Then $(U_1,\dotsc,U_k)$ is a universal $k$-tuple for $G$
satisfying
$\abs{U_i}\leq \prod_{r=1}^t 8 s_i^{\log_{\abs{G}}\abs{G_r}}=8^t s_i$.
\end{proof}

\section{Small bases}
In this section we show how to use universal sets  to construct small bases
for subsets of size $\leq \sqrt{\abs{G}}$ of a group $G$.
\begin{lemma}
For every $X\subset G$ satisfying $\abs{X}\geq \sqrt{\abs{G}}\log^2\abs{G}$
there is a subset $Y\subset G$ of size $\abs{Y}\leq s$, where
$s=\frac{\sqrt{\abs{G}}}{\log \abs{G}}$
so that $YX=G$.
\end{lemma}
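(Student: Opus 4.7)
The condition $YX=G$ says exactly that the left translates $\{yX:y\in Y\}$ cover~$G$, so I would build $Y$ greedily. If $U\subseteq G$ denotes the set not yet covered, a double counting gives
\begin{equation*}
\sum_{y\in G}\abs{yX\cap U}\;=\;\sum_{g\in U}\#\{(y,x)\in G\times X:yx=g\}\;=\;\abs{X}\abs{U},
\end{equation*}
since for each fixed $x$ there is exactly one $y=gx^{-1}$. Hence some $y^{*}\in G$ satisfies $\abs{y^{*}X\cap U}\ge \abs{X}\abs{U}/\abs{G}$, and adding $y^{*}$ to $Y$ shrinks the uncovered set by a factor of at least $1-\abs{X}/\abs{G}$.

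Iterating this step, after $t$ greedy choices the uncovered set has size at most $\abs{G}(1-\abs{X}/\abs{G})^{t}\le \abs{G}\exp(-t\abs{X}/\abs{G})$. I would first run the greedy procedure for $T_{1}=\lceil \abs{G}\log\abs{X}/\abs{X}\rceil$ steps, at which point the uncovered set has size at most $\abs{G}/\abs{X}$. In a second phase I use the trivial fact that any remaining uncovered element $u$ can be knocked out by adding some $y\in uX^{-1}$ to $Y$; this shrinks the uncovered set by at least one per step, so a further $T_{2}\le \abs{G}/\abs{X}$ steps suffice.

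Combining the two phases and invoking the hypothesis $\abs{X}\ge \sqrt{\abs{G}}\log^{2}\abs{G}$ gives
\begin{equation*}
\abs{Y}\;\le\;T_{1}+T_{2}\;\le\;\frac{\abs{G}(\log\abs{X}+1)}{\abs{X}}+1
\;\le\;\frac{\sqrt{\abs{G}}(\log\abs{G}+2)}{\log^{2}\abs{G}}+1
\;\le\;\frac{\sqrt{\abs{G}}}{\log\abs{G}}=s,
\end{equation*}
valid for $\abs{G}$ sufficiently large, as is assumed throughout the paper.

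The argument is essentially routine. The one subtlety is that the multiplicative decrease by itself only drives the uncovered set down to roughly $\abs{G}/\abs{X}$ (where the average gain per step drops below~$1$), which is why the trivial ``one per step'' second phase is needed to complete the cover; the crude bookkeeping still fits comfortably under $s$ because of the $\log^{2}\abs{G}$ slack in the hypothesis on $\abs{X}$.
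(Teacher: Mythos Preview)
Your argument is sound, but it takes a different route from the paper: the paper picks $Y$ as a random multiset of $s$ independent uniform elements of $G$ and computes that the expected number of uncovered elements is $\abs{G}(1-\abs{X}/\abs{G})^{s}<1$, so some choice of $Y$ covers everything. You instead run the deterministic greedy set-cover argument, using the same double count to guarantee multiplicative shrinkage and then a trivial second phase. Both proofs are standard and essentially interchangeable here; the probabilistic version is a couple of lines shorter, while your greedy version has the mild advantage of being constructive and of making the role of the $\log$ factor completely transparent.

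One small arithmetic slip: your final displayed inequality
\[
\frac{\sqrt{\abs{G}}(\log\abs{G}+2)}{\log^{2}\abs{G}}+1\;\le\;\frac{\sqrt{\abs{G}}}{\log\abs{G}}
\]
is false as written, since the left side equals $\dfrac{\sqrt{\abs{G}}}{\log\abs{G}}+\dfrac{2\sqrt{\abs{G}}}{\log^{2}\abs{G}}+1$. What your two-phase bound actually yields is $\abs{Y}\le (1+o(1))s$, not $\abs{Y}\le s$. This is easily repaired: drop the second phase and run greedy until the uncovered set is empty. Since $-\log(1-p)>p$, the bound $\abs{G}(1-\abs{X}/\abs{G})^{t}<1$ holds already for some integer $t\le \abs{G}\log\abs{G}/\abs{X}\le s$, and the paper's convention of ignoring floors and ceilings absorbs the remaining $+1$. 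Alternatively, keep the bound $(1+o(1))s$ and note that the lemma is only used inside the proof of Theorem~\ref{nondoublingENthm}, where there is ample slack in the constants.
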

\begin{proof}
Let $Y=\{y_1,\dotsc,y_s\}$ be a random set of $s$ (not necessarily distinct) elements,
where each $y_i$ is chosen uniformly at random from $G$, and all choices are independent.
Clearly an element $g \in G$ does not belong to $YX$ if and only if $Y$ is disjoint from
the set $\{gx^{-1}~|~x \in X\}$. Thus the probability that $g\in G$ does not belong to $YX$ is
$\bigl(1-\frac{\abs{X}}{\abs{G}}\bigr)^s$.
The expected number of elements of $G$ that do not belong to $YX$ is
\begin{equation*}
\abs{G}\left(1-\frac{\abs{X}}{\abs{G}}\right)^s\leq \abs{G}
\left(1-\frac{\log^2\abs{G}}{\sqrt{\abs{G}}}\right)^%
{\sqrt{\abs{G}}/\log\abs{G}}<1
\end{equation*}
implying that there is a choice of $Y$ for which $G=YX$.
\end{proof}

With the lemma in our toolbox we are just a step away from
showing that the existence of moderately sized non-doubling sets are all what is
needed to construct small bases.
\begin{proof}[Proof of Theorem~\ref{nondoublingENthm}]
Let $G$ be a group with $n$ elements containing a non-doubling set $X$ of size
$\sqrt{n}\log^2 n \leq \abs{X}\leq\sqrt{n}\log^{10}n$. By the above lemma $G$ contains a set $Y$
of size $s=\frac{\sqrt{n}}{\log n}$ such that $YX=G$.
Suppose $A\subset G$,
$\abs{A}\leq \sqrt{n}$. Partition $A$ into disjoint subsets $A_1,\dotsc,A_s$,
where $A_i$ are all members of $A$ that lie in $y_iX$ and do not lie
in any of the previous set $y_jX$ for $j<i$. Split each $A_i$ into
$\lceil \abs{A_i}/k\rceil$ pairwise disjoint sets $T_{i,j}$, each of size
at most $k$, where $k=\frac{\log n}{30\log\log n}$. Let $U$ be a $k$-universal set
for $X$ of size at most
\begin{eqnarray*}
|U| &\leq& 36|X|^{1-1/k}\log^{1/k}|X| \leq 36\Big(\sqrt{n}\log^{10}n\Big)^{1-1/k}\log^{1/k} n\\
&<& \frac{\sqrt{n}\log^{10}n}{n^{1/2k}}=\frac{\sqrt{n}}{\log^5 n},
\end{eqnarray*}
which exists by Theorem~\ref{randthm}. For each set $T_{i,j}$ defined above, let $g_{i,j}$ be
an element of $G$ so that $T_{i,j}\subset g_{i,j}U$. The existence
of $g_{i,j}$ follows from the fact that $U$ is $k$-universal for $X$
and the fact that $A_i$ is contained in the shift $y_iX$ of $X$. Note that
the number of elements $g_{i,j}$ is
\begin{equation*}
\sum_{i=1}^s \lceil \abs{A_i}/k\rceil\leq \frac{A}{k}+s \leq
\frac{\sqrt{n}}{k}+\frac{\sqrt{n}}{\log n}\leq
40\frac{\sqrt{n}\log\log n}{\log n}.
\end{equation*}
Finally, to complete the proof, define $B=U\cup\{g_{i,j}: 1\leq i\leq s,\ 1\leq j\leq \lceil\abs{A_i}/k\rceil\}$ and
note that $A\subset BB$ and $|B| \leq |U|+40\frac{\sqrt{n}\log\log n}{\log n}<50\frac{\sqrt{n}\log\log n}{\log n}$.
\end{proof}

\noindent\textbf{Remark: }The proof actually gives a somewhat stronger result than stated
in Theorem~\ref{nondoublingENthm}. Call a set $X\subset G$ non-expanding if there is a set
$X^*\subset G$ so that $\abs{X}=\abs{X^*}$ and $\abs{X^*X}\leq\abs{X}\log \abs{G}$.
Trivially, every non-doubling set is non-expanding.
Combining the estimate on the size of $k$-universal sets
from the remark following the proof of Theorem \ref{randthm}
together with the above arguments gives
the following
\begin{theorem}
If $G$ contains a non-expanding $X$ satisfying $\sqrt{\abs{G}}\log^2\abs{G}\leq
\abs{X}\leq\sqrt{\abs{G}}\log^{10}\abs{G}$,
then $G$ satisfies the EN-condition.
\end{theorem}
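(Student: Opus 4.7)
The plan is to follow the proof of Theorem~\ref{nondoublingENthm} essentially verbatim, replacing the non-doubling hypothesis (used there to invoke Theorem~\ref{randthm}) with the non-expanding hypothesis, which lets us invoke the sharper bound from the remark after Theorem~\ref{randthm}. That remark states that for any two sets $X, X^*\subset G$ of equal size, there exists a $k$-universal set $U$ for $X$ with
\begin{equation*}
|U| \leq 12\,\frac{|X^*X|\log^{1/k}|X|}{|X|^{1/k}}.
\end{equation*}
So by assumption $|X^*X|\leq |X|\log|G|$, giving $|U|\leq 12\,|X|^{1-1/k}\log|G|\log^{1/k}|X|$.

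First I would apply the preceding lemma to the non-expanding set $X$ (whose size is at least $\sqrt n\log^2 n$, exactly the hypothesis of the lemma) to obtain $Y=\{y_1,\dots,y_s\}\subset G$ with $s=\sqrt n/\log n$ and $YX=G$. Given any $A\subset G$ with $|A|\leq\sqrt n$, partition it as $A=A_1\cup\dots\cup A_s$ where $A_i$ consists of the elements of $A$ lying in $y_iX$ but not in any $y_jX$ for $j<i$. Then split each $A_i$ into $\lceil|A_i|/k\rceil$ pieces $T_{i,j}$ of size at most $k$, where I take exactly the same value $k=\frac{\log n}{30\log\log n}$ as in the proof of Theorem~\ref{nondoublingENthm}.

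Next I would verify that the resulting $k$-universal set $U$ for $X$ is still of size $o(\sqrt n\log\log n/\log n)$. With the choice of $k$ above, $|X|^{1/k}\geq n^{1/(2k)} = \exp(15\log\log n)=\log^{15} n$, so using $|X|\leq\sqrt n\log^{10}n$ we get
\begin{equation*}
|U|\leq 12\cdot\frac{\sqrt n\log^{10}n}{\log^{15}n}\cdot\log n\cdot\log^{1/k}(\sqrt n\log^{10}n)\leq\frac{\sqrt n}{\log^3 n},
\end{equation*}
which is even better than the bound $\sqrt n/\log^5 n$ that appeared in Theorem~\ref{nondoublingENthm} (any polylogarithmic slack is fine). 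For each $T_{i,j}\subset y_iX$, $k$-universality of $U$ for $X$ produces $g_{i,j}\in G$ with $T_{i,j}\subset g_{i,j}U$, and the number of these elements is at most $|A|/k + s\leq 40\sqrt n\log\log n/\log n$, exactly as before.

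Setting $B = U\cup\{g_{i,j}\}$ gives $A\subset BB$ with $|B|\leq 50\sqrt n\log\log n/\log n$, establishing the EN-condition. The only substantive change from the proof of Theorem~\ref{nondoublingENthm} is in the estimate for $|U|$, and the main thing to check is that the extra factor of $\log|G|$ coming from the weaker non-expanding assumption (as opposed to non-doubling) is cheaply absorbed by the slack in $|X|^{1/k}$. Since $|X|^{1/k}$ already grows like $\log^{15}n$, a single $\log n$ factor is trivially swallowed, so no obstacle arises; this is really just a cosmetic upgrade of the earlier proof.
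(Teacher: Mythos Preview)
Your proposal is correct and follows exactly the approach the paper indicates: rerun the proof of Theorem~\ref{nondoublingENthm}, substituting the bound from the remark after Theorem~\ref{randthm} in place of Theorem~\ref{randthm} itself, and observe that the extra $\log|G|$ factor coming from the non-expanding (rather than non-doubling) hypothesis is absorbed by the slack in $|X|^{1/k}$. One small verbal slip: the bound $\sqrt{n}/\log^3 n$ you obtain for $|U|$ is \emph{worse}, not better, than the $\sqrt{n}/\log^5 n$ in Theorem~\ref{nondoublingENthm}; but as you say, any polylogarithmic saving suffices, so this does not affect the argument.
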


\begin{lemma}
\label{l33}
For every finite solvable group $G$ of order $m$ and every $x$ satisfying
$1 <x  \leq m$ there is a non-doubling  subset $X \subset G$ satisfying
$x \leq |X| \leq 2x$.
\end{lemma}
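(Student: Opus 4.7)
The plan is to exploit the fact that every finite solvable group $G$ admits a subnormal series
\begin{equation*}
1=H_0\triangleleft H_1\triangleleft\cdots\triangleleft H_t=G
\end{equation*}
in which every successive index $\abs{H_{i+1}}/\abs{H_i}$ is a prime; this is a standard refinement, obtained by starting from any series with abelian factors (guaranteed by solvability) and refining each abelian factor using its own composition series. Given $x$ with $1<x\leq m$, choose $i$ to be the largest index for which $\abs{H_i}\leq x$; this is well-defined since $\abs{H_0}=1$, and if $i=t$ then $x=m$ and $X=G$ trivially works.

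Otherwise set $p=\abs{H_{i+1}}/\abs{H_i}$, so that $\abs{H_i}\leq x<p\abs{H_i}$, and let $y=\lceil x/\abs{H_i}\rceil\in\{1,\dots,p\}$. The construction is to take $y$ ``consecutive'' cosets: pick $b\in H_{i+1}$ whose image generates the cyclic quotient $H_{i+1}/H_i$ and set
\begin{equation*}
X=\bigcup_{j=0}^{y-1}b^{j}H_i.
\end{equation*}
The size bound $x\leq\abs{X}=y\abs{H_i}\leq x+\abs{H_i}\leq 2x$ follows at once from the definition of $y$ and from $\abs{H_i}\leq x$. For the doubling, normality of $H_i$ in $H_{i+1}$ gives $b^{k}H_i=H_ib^{k}$, and hence $XX=\bigcup_{0\leq j,k<y}b^{j+k}H_i$ is a union of at most $\min(2y-1,p)$ cosets of $H_i$. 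When $2y-1\leq p$ this already gives $\abs{XX}\leq(2y-1)\abs{H_i}<2\abs{X}$; when $2y-1>p$ one has $y>(p+1)/2\geq p/3$, so $\abs{XX}\leq p\abs{H_i}\leq 3y\abs{H_i}=3\abs{X}$. Either way $X$ is non-doubling.

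The only ingredient beyond routine verification is the existence of the prime-index refined subnormal series, which is standard for solvable groups; I do not anticipate a serious obstacle. The conceptual point is that a subgroup alone rarely has order lying in the narrow window $[x,2x]$, so one replaces ``subgroup'' by ``union of a short arithmetic progression of cosets modulo the next subgroup in the series,'' a set whose size always lands in $[x,2x]$ and whose doubling is controlled precisely because $H_i$ is normal in $H_{i+1}$ and the quotient has prime order.
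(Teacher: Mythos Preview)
Your argument is correct and essentially identical to the paper's: both take a subnormal series with cyclic quotients (you refine further to prime indices, which is harmless), locate the largest term $H_i$ of size at most $x$, and form $X$ as a union of $\lceil x/\abs{H_i}\rceil$ consecutive cosets of $H_i$ in $H_{i+1}$. Your case split in the doubling estimate is unnecessary, since $\abs{XX}\leq(2y-1)\abs{H_i}<2\abs{X}$ holds regardless of whether the cosets wrap around, but the conclusion is unaffected.
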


\begin{proof}
Let $G$ be a finite solvable group. Then there is a normal sequence
\begin{equation*}
\{1\}=G_k \subset G_{k-1} \subset \ldots \subset G_1 \subset G_0=G,
\end{equation*}
where each $G_{i+1}$ is a normal subgroup of $G_i$ and all the
quotients $G_i/G_{i+1}$ are cyclic. Let $i$ be the minimum index so
that $\abs{G_{i+1}}<x$. Let coset $hG_{i+1}$
be a generator of $G_i/G_{i+1}$, put
$t=\lceil \frac{x}{\abs{G_{i+1}}}\rceil$
and define $X=h^0G_{i+1}\cup h^1 G_{i+1}\cup\dotsb\cup h^{t-1}G_{i+1}$. Then
$X$ is of size at least $x$ and at most $2x$, and as
$XX=h^0G_{i+1}\cup h^1 G_{i+1}\cup\dotsb\cup h^{2t-2}G_{i+1}$
it is non-doubling.
\end{proof}

\noindent
Having finished all the necessary preparations, we can now prove
Corollary \ref{solvthm}


\begin{proof}[Proof of Corollary~\ref{solvthm}]
a)\, If a finite group $G$ of order $n$ contains a solvable subgroup
of size $m \geq \sqrt n \log^2 n$ then, by Lemma \ref{l33} with
$x = \sqrt n \log^2 n$, this subgroup
contains a non-doubling set of  size at least $x=\sqrt n \log^2 n$
and at most $2x$.
Thus, by Theorem~\ref{nondoublingENthm}, $G$ satisfies the EN-condition.
In particular, every finite solvable group  satisfies this condition, and
the assertion of part (a) follows, as every group of odd order is solvable,
by the Feit-Thompson theorem \cite{feit_thompson}.

b)\, Clearly $S_n$ contains a subgroup isomorphic to $S_m$ for all $m<n$. Since, all ratios
$$\abs{S_{m+1}}/\abs{S_m}=m+1\leq n < \log^2 (n!)=\log^2 \abs{S_n},$$
it is easy to see that $G=S_n$ contains a subgroup whose size is between  $\sqrt{\abs{G}}\log^2\abs{G}$
and $\sqrt{\abs{G}}\log^4\abs{G}$. As every subgroup is non-doubling, the
result now follows from Theorem~\ref{nondoublingENthm}.
\end{proof}

\noindent\textbf{Remark: } Corollary ~\ref{solvthm} can be used to show that many additional
finite groups satisfy the EN condition, as they contain large solvable subgroups. In
particular, this holds for all linear groups, see \cite{Ma}. It seems plausible that
in fact every finite group satisfies the EN-condition.

\section{Bases for powers}
To show that there is no small basis for the set of $d$'th powers, we
shall use estimates on the number of representations of a number as a
sum of several $d$'th powers. We let $P_d(n)=\{t^d : t=1,\dotsc,n\}$ be
the set of the first $n$ $d$'th powers.
\begin{lemma}
If the equation
\begin{equation}\label{sumpweq}
x_1^d-x_2^d+\dotsb+(-1)^{k+1}x_k^d=y,
\end{equation}
has fewer than
$O(B^{\epsilon})$ solutions in distinct positive integers
$x_1,\dotsc,x_k$ not exceeding $B$,
then $\abs{A}=\Omega(n^{1-\frac{1+\epsilon}{k+1}})$
for any set $A$ of positive integers for which $P_d(n)\subset A+A$.
\end{lemma}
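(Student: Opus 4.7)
My plan is to count walks of length $k$ in an auxiliary graph built from $A$ and compare a convexity-based lower bound on walks with an upper bound coming from the hypothesis. Specifically, let $G$ be the graph on vertex set $A$ whose edges are the unordered pairs $\{a,b\}$ with $a+b\in P_d(n)$; the containment $P_d(n)\subset A+A$ guarantees that each of the $n$ values $t^d$ contributes at least one edge, so $G$ has at least $n$ edges. Writing $m=\abs{A}$, the Blakley--Roy/Sidorenko inequality for paths yields $W_k\geq (2n)^k/m^{k-1}$ for the number of length-$k$ walks in $G$.

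The key algebraic identity, obtained by telescoping, is that for any walk $(a_0,a_1,\dotsc,a_k)$ whose $i$-th edge satisfies $a_{i-1}+a_i=s_i^d$,
\[
\sum_{i=1}^{k}(-1)^{i+1}s_i^d \;=\; a_0+(-1)^{k+1}a_k.
\]
Call a walk \emph{good} if $s_1,\dotsc,s_k$ are pairwise distinct. For each fixed pair $(a_0,a_k)\in A\times A$ the right-hand side above is determined, and the hypothesis bounds the number of distinct-integer tuples $(s_1,\dotsc,s_k)$ whose alternating $d$-th power sum equals that value by $O(n^{\epsilon})$. Since the whole walk is reconstructed from $(a_0,s_1,\dotsc,s_k)$ via $a_i=s_i^d-a_{i-1}$, there are at most $O(n^{\epsilon})$ good walks between any given endpoint pair, and summing over the $m^2$ endpoint pairs gives $W_k^{\mathrm{good}}\leq C\, m^2 n^{\epsilon}$.

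It remains to verify that bad walks --- walks that traverse some edge sum twice --- form only a lower-order fraction of $W_k$. Pivoting on a repeated edge decomposes such a walk into three shorter sub-walks of total length $k-2$, and a routine count bounds the number of bad walks by $O(k^2/n)\cdot W_k$, which is negligible for $n$ sufficiently large. Hence $W_k^{\mathrm{good}}\geq W_k/2\geq c\,n^k/m^{k-1}$, and combining with the upper bound forces $m^{k+1}=\Omega(n^{k-\epsilon})$, i.e., $\abs{A}=\Omega(n^{1-(1+\epsilon)/(k+1)})$.

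The main obstacle will be the bookkeeping for bad walks --- in particular, controlling pairs of edges in $G$ that happen to share the same sum (arising when $t^d$ has several representations in $A+A$) --- but a Cauchy--Schwarz estimate on the additive energy of $A$ restricted to $P_d(n)$, together with the repeated-edge count sketched above, is enough to close this gap. Everything else reduces to the telescoping identity, one convexity lower bound on walks, and one application of the hypothesis per endpoint pair.
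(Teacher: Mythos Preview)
Your overall strategy is right and matches the paper's: build a graph on $A$, use the telescoping identity $\sum_{i}(-1)^{i+1}s_i^d = a_0+(-1)^{k+1}a_k$ to convert length-$k$ walks into solutions of the alternating-power equation, and invoke the hypothesis to cap the number of good walks between any fixed endpoint pair by $O(n^{\epsilon})$. The difficulty is exactly where you locate it, but your proposed resolution does not work.

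The claim that bad walks form only an $O(k^2/n)$ fraction of $W_k$ is unsupported and is false in general. Consider a graph that is a disjoint union of $n/3$ triangles, each edge carrying a distinct label in $\{1,\dots,n\}$: here $m=n$, $W_k = 2^k n$, yet for $k\ge 4$ every walk of length $k$ repeats an edge, so $W_k^{\mathrm{good}}=0$ and your inequality $W_k^{\mathrm{good}}\ge W_k/2$ collapses. Your ``pivot and decompose into three sub-walks of total length $k-2$'' sketch does not yield the stated bound, because nothing controls how many such sub-walk triples can be reassembled into a single walk; and the Cauchy--Schwarz remark on additive energy gives a \emph{lower} bound on $\sum_t e_t^2$, not the upper bound you would need. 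More to the point, your diagnosis that the obstacle is ``pairs of edges sharing the same sum'' is off: even when every $t^d$ has a unique representation in $A+A$ (so each $e_t=1$), a walk can still be bad by repeating an \emph{edge}, and this is precisely what dominates in low-girth or low-minimum-degree graphs where Blakley--Roy is far from tight on trails.

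The paper sidesteps the issue entirely. It keeps exactly one edge per element of $P_d(n)$ (so distinct edges automatically mean distinct $s_i$'s), and then, instead of a global walk count, it extracts a nonempty subgraph $G'$ of minimum degree at least $n/m$ by iteratively deleting vertices of smaller degree. In $G'$ one can greedily grow at least $(n/m-k)^k$ trails of length $k$ from any fixed vertex, since at each step at most $k-1$ incident edges are already used. Pigeonholing these trails over the $m$ possible endpoints and applying the hypothesis gives $(n/m-k)^k = O(m\,n^{\epsilon})$, which rearranges to $m=\Omega\bigl(n^{1-(1+\epsilon)/(k+1)}\bigr)$. The minimum-degree extraction is the one ingredient your plan is missing; once you have it, Blakley--Roy and the entire bad-walk analysis become unnecessary.
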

\begin{proof}
Suppose $A$ is an $m$-element set satisfying $A+A\supset P_d(n)$.
Let $G$ be a graph on the vertex set $A$ in which for each
element $p \in P_d(n)$ we choose the lexicographically first pair
of elements
$a,a'\in A$ whose sum is $p$ and
connect them by an edge.
Clearly
$G$ contains exactly $n$ edges. Let
$a,a'\in A$ be any two vertices of the graph. Let $y=a+(-1)^{k-1}a'$.
Any path with distinct edges of length $k$ between $a$ and $a'$ gives a rise to a solution
of \eqref{sumpweq}. Indeed if $a=a_0,a_1,\dotsc,a_k=a'$, then
$y=(a_0+a_1)-(a_1+a_2)+\dotsb+(-1)^{k-1}(a_{k-1}+a_k)$
is an alternating sum of $d$'th powers. Since each path between $a$ and $a'$
corresponds to a different
solution of \eqref{sumpweq}, it follows that every pair
of vertices in $G$ are connected by no more than
$O(n^{\epsilon})$ such paths of length~$k$. As every graph on $m$
vertices and $n$ edges contains a non-empty subgraph with minimum degree
at least $n/m$, the graph $G$ contains such a subgraph\footnote{Indeed, removing
vertices of degree less than $n/m$ one by one increases the average degree of
the remaining graph. Therefore the process terminates with a non-empty graph of minimum
degree $\geq n/m$.}, which we
denote by $G'$. Let $a\in V(G')$ be any vertex of that graph.
Then there are more than $(\frac{n}{m}-k)^k$ paths of length $k$ consisting of
distinct edges originating
from $a$. By the pigeonhole
principle a fraction of $1/m$ of them has the same endpoint. Thus
$(\frac{n}{m}-k)^k \leq O(m n^{\epsilon})$ and the lemma follows.
\end{proof}

To give a lower bound on the size of a basis for $P_d(n)$ it therefore
suffices to give good estimates on the number of solutions to
\eqref{sumpweq}. Heath-Brown \cite[Theorem~13]{heath_brown}
proved that for $y\leq B^d$ the number of solutions
to $x_1^d+x_2^d+x_3^d=y$ in positive integers $x_1,x_2,x_3$ is
$O(B^{\frac{2}{\sqrt{d}}+\frac{2}{d-1}+o(1)})$.
Inspection of the proof shows that it applies verbatim to yield
the same bound on
the number of solutions to
$(\pm 1)x_1^d+(\pm 1)x_2^d+(\pm 1)x_3^d=y$ in distinct positive
integers $x_1,x_2,x_3$ not exceeding~$B$. This establishes
theorem~\ref{enimpr}. \vspace{1ex}

\section{Concluding remarks}
The questions we consider in this paper are all special cases of the following
general problem:
\begin{problem}
Given a set $X$, a group $G$ that acts on $X$ and a family
$\mathcal{F}\subset 2^X$, find the minimum possible size of
a set $U$ such that for every $S\in\mathcal{F}$ there is
a $g\in G$ for which $gS\subset U$.
\end{problem}

\noindent
For example, Theorem \ref{constrthm} treats the case
when $X=G$, $\mathcal{F}=\binom{G}{k}$ and $G$ acts on $X$ by left multiplication.
Bourgain's arithmetic version of Kakeya problem is essentially the
case $X=\Z/p\Z$ where $\mathcal{F}$ is the family of all $k$-term arithmetic progressions.

Another important special case of the above problem deals with
universal graphs. For a family
$\mathcal{H}$ of graphs, a graph $\Gamma$ is
$\mathcal{H}$-universal if, for each $H\in \mathcal{H}$, the
graph $\Gamma$ contains a subgraph isomorphic to $H$.
In our language it is the universal set problem
for $G=S_n$ and $X=\binom{[n]}{2}$ with $G$ acting on $X$ by permuting the elements
of $[n]$.
The universal graph problem, which deals with the minimum possible number of edges in a
graph that is universal for a given family,
has been studied extensively for many classes of graphs $\mathcal{H}$. For instance,
if $\mathcal{H}$ is the
family of all $k$-edge graphs, then the smallest $\mathcal{H}$-universal
graph is of size $\Theta(k^2/\log^2 k)$ \cite{alon_asodi}.  The problem
has also been studied for other families of graphs, such as graphs of bounded
degree, trees or planar graphs. The interested reader is referred to
\cite{alon_asodi}, \cite{ac} and the references therein.

Besides the Kakeya conjecture some other analogues of the universal set problem for
infinite
groups have been considered as well.
For example, Haight \cite{haight_semigroup} constructed a set $U$ of real
numbers of zero Lebesgue measure that contains a translate of every
countable set. The results in this paper can be adapted to construct
a $U\subset \R$ of Minkowski dimension $1-1/k$ containing a translate
of every $k$-element set.

The universal set problem is certainly hard in general.
It would be interesting to find suitable general conditions
under which it can be solved.\vspace{1.5ex}

\noindent\textbf{Acknowledgments.} We thank Roger Heath-Brown,
Per Salberger and Aner Shalev for helpful discussions.

\end{document}